\DeclareMathOperator{\blkdiag}{\mathrm{blkdiag}}
\DeclareMathOperator{\col}{\mathrm{col}}
\newtheorem{Theorem}{Theorem}
\newtheorem{Lemma}{Lemma}
\newtheorem{Problem}{Problem}
\newtheorem{Corollary}{Corollary}
\newtheorem{Remark}{Remark}
\newtheorem{Assumption}{Assumption}
\newcommand{\m}[1]{\mathbf{#1}}
\newcommand{\mc}[1]{\mathcal{#1}}
\newcommand{\mb}[1]{\mathbb{#1}}
\newcommand{\abs}[1]{\lVert{#1} \rVert}
\newcommand\numberthis{\addtocounter{equation}{1}\tag{\theequation}}
	\tikzstyle{frame} = [draw, -latex]
	\tikzstyle{line} = [draw]
	\tikzstyle{line2} = [draw, dashdotted]
	\tikzstyle{line3} = [draw, dashed]
	\tikzstyle{line3UD} = [draw, dashed]
	\tikzstyle{place} = [circle, draw=black, fill=white, thick, inner sep=2pt, minimum size=1mm]
	\tikzstyle{place2} = [circle, draw=black, fill=black, thick, inner sep=2pt, minimum size=1mm]
	\tikzstyle{placeRed} = [circle, draw=red, fill=red, thick, inner sep=2pt, minimum size=1mm]
	\tikzstyle{vertex} = [circle, draw=black, fill=black, thick, inner sep=2pt, minimum size=1mm]
\def\endthebibliography{%
  \def\@noitemerr{\@latex@warning{Empty `thebibliography' environment}}%
  \endlist
}
\title{\LARGE Bearing-constrained Formation Tracking Control of Nonholonomic Agents without Inter-agent Communication}
\author{Quoc Van Tran and Jinwhan Kim
\thanks{The authors are with the Department of Mechanical Engineering, Korea Advanced Institute of Science and Technology (KAIST), Daejeon, Republic of Korea. Emails: $\{$quoctran; jinwhan$\}$@kaist.ac.kr}
\thanks{This work is supported by the BK21 FOUR Program of the National Research Foundation Korea (NRF) grant funded by the Ministry of Education (MOE), in part by the Future Mobility Testbed Development through IT, AI, and Robotics.}
}
\begin{document}
\maketitle
\begin{abstract}
This letter presents two bearing-constrained formation tracking control protocols for multiple nonholonomic agents based respectively on the bearing vectors and displacements between the agents. The desired formation pattern of the system is specified by the desired inter-agent bearing vectors. In the proposed control schemes, there are two or more leaders moving with the same constant velocity; the other follower agents do not have the information of the leaders' velocity nor communicate variables with their neighbors. Under both the proposed control laws, the system achieves the moving target formation asymptotically. Simulation results are provided to support the theoretical development.
\end{abstract}
\section{Introduction}
Bearing-constrained formation control, in which multiple agents form the desired formation specified by the desired inter-agent bearings, has attracted much interest from researchers in recent years  \cite{Zhao2016tac,Quoc2018ccta, Schiano2016,Quoc2018tcns,Xli2020TCyber}. If the control protocol utilizes merely the inter-agent bearing vectors, it is referred to as a bearing-only control law \cite{Bishop2011, Zhao2019tac}. Bearing measurements can be obtained by passive sensors such as optical cameras \cite{Tron2016csm} or wireless sensor arrays. Thus, bearing-only control approaches may be favored in military and marine applications where signal transmission is restricted.

Formation tracking requires the agents to both form the desired formation and move with the same reference velocity simultaneously, for which the leader-follower approach has often been used. Based on the inter-agent displacements, \cite{XPeng2020Tmecha} {addressed} formation tracking for nonholonomic agents with a directed tree graph and the knowledge of the neighbor's velocity of each agent. Distance-based formation tracking control laws were proposed for systems of single-integrator \cite{Qingkai2018SCL} and nonholonomic agents \cite{Khaledyan2018acc} by either utilizing consensus-based (centroid) estimators or assigning the reference velocity to all agents. 
Bearing-based formation tracking using the inter-agent displacements and with two or more leaders has been investigated for single-integrator \cite{Zhao2017}, double-integrator \cite{Zhao2017}, and unmanned autonomous vehicles \cite{YHuang2021tcns}. The control law for double-integrator agents in \cite{Zhao2017} requires in addition the relative velocities between the agents. \cite{YHuang2021tcns} employs consensus-based observers to estimate the leaders' velocity.  

Formation tracking based merely on the inter-agent bearings is a challenging problem because bearing-only control laws are always bounded even when the position errors grow unbounded \cite{Zhao2019tac}. Furthermore, the formation's scale is not explicitly observable from bearing-only measurements \cite{Schiano2016}. In order to fix the formation's scale, bearing-only formation tracking control methods often employ at least two leaders moving with the same reference velocity.
Bearing-only formation tracking laws using constant-velocity leaders have been proposed for systems with single-integrator \cite{Zhao2019tac}, double-integrator \cite{Zhao2019tac,Minh2021auto,JZhaoLCSS2021}, and nonholonomic agents \cite{Zhao2019tac}. In \cite{Zhao2019tac}, the reference velocity is assumed to be known by all the nonholonomic agents, and the bearing rates are needed in the control laws for double-integrator agents in \cite{Zhao2019tac} and \cite{JZhaoLCSS2021}. To drop the requirement of the bearing rates, \cite{Minh2021auto} explore{d} a bearing-only tracking control law with integral-like control terms. Bearing-only formation tracking schemes have recently {been} investigated for single-integrator \cite{Minh2022LCSS} and nonholonomic agents\cite{XLi2021Tcyb} with time-varying reference velocities. However, the directed graph of the system in \cite{Minh2022LCSS} is required to be acyclic so that the followers track the leaders in a sequential manner. In \cite{XLi2021Tcyb}, the agents estimate the reference velocity using a consensus-based observer and by exchanging the estimated variables with their neighbors. 
Nevertheless, due to the use of the signum function, the chattering effect occurs in the estimation or control systems in \cite{XLi2021Tcyb} and \cite{Minh2022LCSS}.

This letter addresses the bearing-constrained formation tracking control for nonholonomic multi-agent systems in two or three dimensions. Such a system can be used to model the kinematics of ground vehicles \cite{Zhao2019tac, Quoc2020tcns}, and underactuated autonomous underwater vehicles\cite{Xiaodong2022AutoSinica}.
As the first contribution, we propose two formation tracking control laws for the system with constant-velocity leaders and based only on the inter-agent bearings and displacements, respectively. The proposed control laws require no information exchanges between the agents, thus obviating the use of consensus-based observers. Further, the control schemes are applicable in both two and three dimensions, while in \cite{XLi2021Tcyb}, only the $2$-dimensional space is considered. Secondly, in the proposed bearing-only tracking control law, the followers do not have the information of the leaders' velocity; unlike \cite{Zhao2019tac}. By using a novel adaptive control term involving an auxiliary variable for each follower, the proposed control protocols drive the agents to the target formation and align their headings asymptotically. 

The rest of this letter is outlined as follows. Preliminaries and the problem formulation are given in Section \ref{sec:prob_formulation}. Sections \ref{sec:bearing_tracking_control_law} and \ref{sec:disp_tracking_control_law} propose two formation tracking control laws using either the inter-agent bearings or displacements. Simulation results are given in Section \ref{sec:simulation}. Section \ref{sec:conclusion} concludes this letter.

\section{Preliminaries and Problem Formulation}\label{sec:prob_formulation}
\subsubsection*{Notation}
We use $\mb{R}$ and $\mb{R}^d$ to denote the sets of real numbers and real $d$-dimensional vectors, respectively. The cross and the Kronecker products are denoted by $\times$ and $\otimes$, respectively.
We also use $\times$ to represent matrix-vector products.
Let $\bm{1}_n=[1,\ldots,1]^\top\in \mb{R}^n$, and $\bm{I}_n$ be the $n\times n$ identity matrix. For $n$ vectors or matrices $\bm{X}_1,\ldots,\bm{X}_n$, denote $\col(\bm{X}_1,\ldots,\bm{X}_n)=[\bm{X}_1^\top,\ldots,\bm{X}_n^\top]^\top$. For $n$ square matrices $\bm{A}_1,\ldots,\bm{A}_n$, let $\blkdiag(\bm{A}_1,\ldots,\bm{A}_n)$ be the block diagonal matrix with the $i$-th block diagonal being $\bm{A}_i$.

\subsection{Graph theory}
A graph is denoted by $\mc{G}=(\mc{V},\mc{E})$, where $\mc{V}=\{1,\ldots,n\}$ denotes its vertex set and $\mc{E}\subseteq\mc{V}\times \mc{V}$ the set of edges. An edge is defined by the pair $e_k=(i,j),i,j\in \mc{V}, k=1,\ldots,m,$ with $m=\vert \mathcal{E} \vert$ being the number of edges. The graph $\mc{G}$ is said to be undirected if $(i,j)\in \mc{E}$ implies $(j,i)\in \mc{E}$. The set of neighbors of $i$ is denoted by $\mc{N}_i=\{j\in\mc{V}:(i,j)\in \mc{E}\}$. For an arbitrary orientation of the $m$ edges $\{e_1,\ldots,e_m\}$ in $\mc{E}$, we define the incidence matrix $\bm{H}=[h_{ki}]\in \mb{R}^{m\times n}$ as $h_{ki}=1$ if $e_k=(j,i)$, $h_{ki}=-1$ if $e_k=(i,j)$, and $h_{ki}=0$ otherwise. 
For a connected graph $\mc{G}$, we have $\mathrm{rank}(\bm{H}(\mc{G}))=n-1$ and $\bm{H}(\mc{G})\bm{1}_n=\bm{0}$ \cite{Mesbahi2010}.
\subsection{Problem formulation} 
Consider a system of $n$ nonholonomic agents in the $d$-dimensional space ($d=2$ or $3$) with an undirected sensing graph $\mc{G}=(\mc{V},\mc{E})$. The position and velocity vectors of each agent $i$ are denoted as $\bm{p}_i$ and $\bm{v}_i\in \mb{R}^d$, respectively. The kinematic model of agent $i$ is given as \cite{Quoc2020tcns}
\begin{align*}
\dot{\bm{p}}_i&= \bm{v}_i=\m{h}_iu_i,\numberthis \label{eq:unicycle_model}\\
\dot{\bm{h}}_i&=\bm\omega_i\times\bm{h}_{i}=-\bm{h}_{i}\times \bm\omega_i,
\end{align*}
where $\m{h}_i\in\mb{R}^d$ denotes the unit heading vector, $u_i\in\mb{R}$ is the forward velocity along $\m{h}_i$, and $\bm\omega_i\in\mb{R}^d$ is the angular velocity of agent $i$. For example, in the two-dimensional ($2$-D) plane, $\bm{h}_i=[\cos(\theta_i),\sin(\theta_i)]^\top$ and $\dot{\bm{h}}_i=\omega_i[-\sin(\theta_i),\cos(\theta_i)]^\top$, where $\theta_i$ is the heading angle of agent $i$ and $\omega_i=\dot{\theta}_i\in \mb{R}$ \cite{Zhao2019tac,XLi2021Tcyb}. 
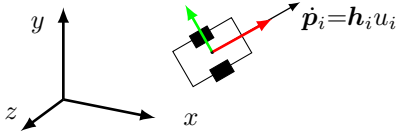
\begin{figure}[t]
\centering
\begin{tikzpicture}[scale = .9]
\node (py) at (-1,2,0) [label=below left:$y$]{};
\node (pz) at (-1,.7,2) [label=above:$z$]{};
\node (px) at (-1+1.5,0.2,0) [label=right:$x$]{};
\node[scale=0.1] (pi_dot) at (2.1,1.7) [label=right:$~~\dot{\bm{p}}_i{=}\bm{h}_iu_i$]{};
\node[place,scale=0.1] (pi) at (1.2,1.2){};

\draw[black,rotate around={30:(pi)}] (0.7,0.9) rectangle (1.7,1.5);
\filldraw[black,rotate around={30:(0.9+0.1,1.5-.1)}] (.8+0.1,1.4-.1) rectangle (1.1+0.1,1.6-.1);
\filldraw[black,rotate around={30:(0.9+0.4,1.5-.6)}] (.8+0.4,1.4-.6) rectangle (1.1+0.4,1.6-.6);
\draw[{line width=1pt}] (-1,.5,0) [frame] -- (px);
\draw[{line width=1pt}] (-1,.5,0) [frame] -- (py);
\draw[{line width=1pt}] (-1,.5,0) [frame] -- (pz);
\draw[{line width=.5pt}] (pi) [frame] -- (2.5,1.94);
\draw[{line width=1.pt},green,->] (pi)[frame]  -- (.8,1.9);
\draw[{line width=1.pt},red,->] (pi) [frame] -- (pi_dot);
\end{tikzpicture}
\caption{A nonholonomic agent in $\mb{R}^d,d=2$ or $3$.}
\label{fig:unicycle}
\end{figure}

Assume that there are $n_l\geq 2$ leaders; the other $n_f:=n-n_l$ agents are called \textit{followers}. The index set\textcolor{blue}{s} of the leaders and followers are $\mc{V}_l=\{1,\ldots,n_l\}$ and $\mc{V}_f=\mc{V}\setminus \mc{V}_l$, respectively. Denote $\bm{p}_f=\col(\bm{p}_{n_l+1},\ldots,\bm{p}_n)\in \mb{R}^{dn_f}$ and $\bm{p}=\mathrm{col}(\bm{p}_1,\ldots,\bm{p}_n)\in \mb{R}^{dn}$. The \textit{augmented graph} $\bar{\mc{G}}=(\mc{V},\bar{\mc{E}})$ is obtained from $\mc{G}$ by insertion of an edge between every two leader nodes, i.e., $\bar{\mc{E}}=\mc{E}\cup\{(i,j):i,j\in \mc{V}_l,i\neq j\}$.
A formation, $(\bar{\mc{G}},\bm{p})$, consists of the graph $\bar{\mc{G}}$ and the configuration $\bm{p}$.  
For each $(i,j)\in \bar{\mc{E}}$, the corresponding \textit{displacement} vector is  $\bm{z}_{ij}=\bm{p}_j-\bm{p}_i\in\mb{R}^d$. Consider an orientation of the edges in $\bar{\mc{E}}$ and let $\bm{H}\in \mb{R}^{m\times n}$ be the corresponding incidence matrix. Then, $\bm{z}=\col(\bm{z}_1,\ldots,\bm{z}_m)=(\bm{H}\otimes \bm{I}_d)\bm{p}:=\bar{\bm{H}}\bm{p}$.

Suppose that $\bm{p}_i\neq \bm{p}_j$. The bearing vector $\bm{g}_{ij}\in\mb{R}^d$ from agent $i$ to a neighboring agent $j$ is defined as
\begin{equation}
\bm{g}_{ij}=\frac{\bm{z}_{ij}}{\abs{\bm{z}_{ij}}}=\frac{\bm{p}_j-\bm{p}_i}{\abs{\bm{p}_j-\bm{p}_i}}.
\end{equation}
The orthogonal projection matrix associated with $\bm{g}_{ij}$ is defined as $\bm{P}_{\bm{g}_{ij}}=\bm{I}_d-\bm{g}_{ij}\bm{g}_{ij}^\top\in \mb{R}^{d\times d}$. 
We define $\bm{g}=\mathrm{col}(\bm{g}_1,\ldots,\bm{g}_m)\in \mb{R}^{dm}$ corresponding to the order of the edges in $\mc{E}$, i.e., $\bm{g}_{k}=\bm{g}_{ij}$ whenever the edge $\bm{z}_k=\bm{z}_{ij}$.

The moving \textit{target formation} of the system is $\bm{p}^*(t)=\col(\bm{p}_1^*(t),\ldots,\bm{p}_n^*(t))\in \mb{R}^{dn}$, whose formation pattern is specified by a time-invariant set of desired bearings 
\begin{equation}\big\{\bm{g}_{ij}^*:\bm{g}_{ij}^*=(\bm{p}_j^*-\bm{p}_i^*)/\abs{\bm{p}_j^*-\bm{p}_i^*},(i,j)\in \bar{\mc{E}}\big\}.
\end{equation} 
Each leader $i\in \mc{V}_l$ can follow the predefined trajectory $\bm{p}_i^*(t)=\bm{p}_i^*(0)+\bm{v}_ct$ with the same constant velocity $\bm{v}_c:=u_c\bm{h}_c\in \mb{R}^d$, where $|u_c|=\abs{\bm{v}_c}$ is a constant and $\bm{h}_c\in \mb{R}^d$ is a time-invariant heading. Thus, $\dot{\bm{p}}_i=\bm{v}_c$, $\bm{h}_i=\bm{h}_c$ and $\bm\omega_i=\bm{0}$ for all $i\in \mc{V}_l$, {and the desired bearings between the leaders are maintained for all} $t\geq 0$. However, each follower $i\in \mc{V}_f$ is unaware of the desired trajectory $\bm{p}_i^*(t)$ nor the reference velocity $\bm{v}_c$, but only knows the desired bearings to its neighbors $\{\bm{g}_{ij}^*\}_{j\in \mc{N}_i}$. Let $\bar{\bm p}^*(t)=\sum_{i=1}^n\bm{p}_i^*(t)/n$ be the target \textit{formation's centroid}, and $\tilde{\bm p}^*=\bm{p}^*(t)-\bm{1}_n\otimes \bar{\bm p}^*(t)$ the geometric pattern of the formation expressed with regard to $\bar{\bm p}^*(t)$. Thus, given a target formation $\bm{p}^*(t)$, $\tilde{\bm p}^*$ is a constant vector.

 We adopt the following assumption for the uniqueness of the target formation. 
\begin{Assumption}\label{ass:bearing_rigid}
The desired formation $(\bar{\mc{G}},\bm{p}^*)$ is infinitesimally bearing rigid.
\end{Assumption}

Though the graph $\bar{\mc{G}}$ is undirected, the leaders are controlled independently and do not necessarily sense relative information to their neighbors. Given $(\bar{\mc{G}},\bm{p}^*)$, the bearing Laplacian $\bm{\mc{B}}(\bm{p}^*)\in \mb{R}^{dn\times dn}$ is define as $[\bm{\mc{B}}]_{ij}=-\bm{P}_{\bm{g}_{ij}^*},i\neq j,(i,j)\in \bar{\mc{E}})$, $[\bm{\mc{B}}]_{ii}=\sum_{k\in \mc{N}_i}\bm{P}_{\bm{g}_{ik}^*}$, and $[\bm{\mc{B}}]_{ij}=\bm 0$ otherwise. We can partition $\bm{\mc{B}}$ into
\begin{equation}
\bm{\mc{B}}=\begin{bmatrix}
\bm{\mc{B}}_{ll} &\bm{\mc{B}}_{lf}\\
\bm{\mc{B}}_{fl} &\bm{\mc{B}}_{ff}
\end{bmatrix},
\end{equation} where $\bm{\mc{B}}_{ll}\in \mb{R}^{dn_l\times dn_l}$ and $\bm{\mc{B}}_{ff}\in \mb{R}^{dn_f\times dn_f}$. Under Assumption \ref{ass:bearing_rigid} and $n_l\geq 2$, $\mathrm{null}(\bm{\mc{B}})=\mathrm{span}(\bm{1}_n\otimes \bm{I}_d,\bm p^*)$ and $\bm{\mc{B}}_{ff}$ is a positive definite matrix \cite{Zhao2016tac}.

Define the tracking error vectors $\bm\delta_p(t)=\bm{p}(t)-\bm{p}^*(t)\in \mb{R}^{dn}$ and $\bm\delta_{p_f}(t)=\bm{p}_f(t)-\bm{p}_f^*(t)\in \mb{R}^{dn_f}$. Thus, $\bm\delta_p=\col(\bm{0}_{dn_l},\bm\delta_{p_f})$ since $\bm p_i(t)=\bm p_i^*(t),\forall i \in \mc{V}_l$. Suppose that no collision happens (sufficient conditions for collision avoidance will be given later) and no communication occurred between the agents. We can now state the formation tracking control problem under study.
\begin{Problem}
Under Assumption \ref{ass:bearing_rigid}, design a distributed control law $(u_i,\bm{\omega}_i)$ for each follower $i\in \mc{V}_f$ so that $\bm\delta_p(t)\rightarrow \bm 0$ asymptotically with two or more constant velocity leaders and using either merely the inter-agent bearing vectors $\{\bm{g}_{ij}(t)\}_{j\in \mc{N}_i}$ or the displacements $\{\bm{z}_{ij}\}_{j\in \mc{N}_i}$.
\end{Problem}

\begin{Remark}
The reference velocity $\bm{v}_c$ may be communicated to the followers, e.g., along a spanning tree of the graph $\mc{G}$. Alternatively, it can be estimated using consensus-based distributed observers by exchanging auxiliary variables between neighboring agents \cite{YHuang2021tcns,XLi2021Tcyb}. However, these approaches are not applicable in the problem under study, since we assume no communication occurred between the agents.
\end{Remark}

\section{Bearing-only Formation Tracking Control}\label{sec:bearing_tracking_control_law}
This section studies bearing-only formation tracking control for nonholonomic agents. Each follower $i\in \mc{V}_f$ senses only the bearing vectors $\bm{g}_{ij}(t)$, and is provided the desired bearing vectors $\bm{g}_{ij}^*$ with regard to its neighbors $j\in \mc{N}_i$.
\subsection{Proposed formation tracking control protocol}
Define $\bm{r}_i:=\sum_{j\in \mc{N}_i}(\bm{g}_{ij}-\bm{g}_{ij}^*)$ for each $i\in \mc{V}_f$. We propose the following control law for each $i\in \mc{V}_f$
\begin{equation}\label{eq:tracking_control_law_i}
\begin{cases}
u_i=\bm{h}_i^\top( k_1\bm{r}_i+\bm{\xi}_i )\\
\dot{\bm{\xi}}_i=\bm{h}_i\bm{h}_i^\top \bm{r}_i-(\bm{I}_d-\bm{h}_i\bm{h}_i^\top )\bm{\xi}_i\\
\bm\omega_i=\bm{h}_i\times k_2(\bm{r}_i+\bm{\xi}_i),
\end{cases}
\end{equation}
where $k_1$ and $k_2>0$, and $\bm{\xi}_i\in \mb{R}^d$ is an auxiliary vector associated with each agent $i$ with $\bm{\xi}_i(0)=\bm{0}$ {(or otherwise chosen arbitrarily)}. Note that $\bm{h}_i\bm{h}_i^\top$ and $(\bm{I}_d-\bm{h}_i\bm{h}_i^\top )$ are the orthogonal projections onto $\bm{h}_i$ and its orthogonal complement, respectively. We have two observations on the design of the adaptive control in \eqref{eq:tracking_control_law_i} involving $\bm{\xi}_i$, which will be shown to converge to $\bm{v}_c$. First, considering the nonholonomic constraint \eqref{eq:unicycle_model}, the first term in the expression of $\dot{\bm{\xi}}_i(t)$ associated with the bearing error vector $\bm{r}_i$ acts only in the feasible velocity direction $\bm{h}_i$. Second, the second term in the expression of $\dot{\bm{\xi}}_i(t)$ is a negative $\bm{\xi}_i$-variable feedback with the matrix gain being the orthogonal projection onto $\{\mathrm{span}(\bm{h}_i)\}^\perp$. This helps to steer $\bm\xi_i$ to the direction parallel to $\bm{h}_i$, $\forall i\in \mc{V}_f$, as will be shown in the main analysis below.
\subsection{Stability analysis}
Substituting the preceding control law into \eqref{eq:unicycle_model} gives 
\begin{equation}\label{eq:traject_under_control_law_i}
\begin{cases}
\dot{\bm p}_i=\bm{h}_i\bm{h}_i^\top( k_1\bm{r}_i+\bm{\xi}_i )\\
\dot{\bm{\xi}}_i=\bm{h}_i\bm{h}_i^\top \bm{r}_i-(\bm{I}_d-\bm{h}_i\bm{h}_i^\top )\bm{\xi}_i\\
\dot{\bm{h}}_i=-\bm{h}_i\times\bm{h}_i\times k_2(\bm{r}_i+\bm{\xi}_i)\\\quad=(\bm{I}_d-\bm{h}_i\bm{h}_i^\top)k_2(\bm{r}_i+\bm{\xi}_i),
\end{cases}
\end{equation} where we have used the relation $-\bm{x}\times\bm{x}\times \bm{y}=(\bm{I}_d-\bm{x}\bm{x}^\top)\bm{y}$ for any $\bm{x},\bm{y}\in \mb{R}^d$ \cite{Ma2004}.
We define the following stacked vectors and matrices
\begin{align*}
&\bm\xi_f=\col(\bm{\xi}_{n_l+1},\ldots,\bm{\xi}_{n}), \bm\xi=\col(\bm{1}_{n_l}\otimes \bm{v}_c,\bm{\xi}_f),\\
 &\bm{Z}=\blkdiag(\bm{0}_{dn_l\times dn_l},\bm{I}_{dn_f})\in \mb{R}^{dn\times dn},\\
&\bm{h}=\col(\bm{h}_1,\ldots,\bm{h}_n)\in \mb{R}^{dn},\numberthis \label{eq:stack_variable_def}\\ 
 &\bm{D}_{\bm{h}_i}=\blkdiag(\{\bm{h}_i\bm{h}_i^\top\}_{i\in \mc{V}}) \in \mb{R}^{dn\times dn},\\
&\bm{D}_{\bm{h}_i^\perp}=\blkdiag(\{\bm{I}_d-\bm{h}_i\bm{h}_i^\top\}_{i\in \mc{V}})\in \mb{R}^{dn\times dn}.
\end{align*}
In a compact form, we can express the $n$-agent system as
\begin{align*}
\dot{\bm{p}}&=\col(
\bm{1}_{n_l}\otimes \bm{v}_c,
\bm{0}_{dn_f}
)-
\blkdiag(\bm{0}_{dn_l\times dn_l},\{\bm{h}_i\bm{h}_i^\top\}_{i\in \mc{V}_f})\\
&\quad\times\left(k_1\bar{\bm{H}}^\top(\bm{g}-\bm{g}^*)-\bm\xi\right)\\
&=(\bm{I}_{dn}-\bm{Z})(\bm{1}_n\otimes \bm{v}_c)-\bm{Z}\bm{D}_{\bm{h}_i}\left(k_1\bar{\bm{H}}^\top(\bm{g}-\bm{g}^*)-\bm\xi\right)\\
\dot{\bm\xi}&=-\blkdiag\left(\bm{0}_{dn_l\times dn_l},\{\bm{h}_i\bm{h}_i^\top\}_{i\in \mc{V}_f}\right)\bar{\bm{H}}^\top(\bm{g}-\bm{g}^*)\\
&\quad-\blkdiag\left(\bm{0}_{dn_l\times dn_l},\{\bm{I}_d-\bm{h}_i\bm{h}_i^\top\}_{i\in \mc{V}_f}\right)\bm\xi\\
&=-\bm{Z}\bm{D}_{\bm{h}_i}\bar{\bm{H}}^\top(\bm{g}-\bm{g}^*)-\bm{Z}\bm{D}_{\bm{h}_i^\perp}\bm\xi, \numberthis \label{eq:compact_form}\\
\dot{\bm h}&=-\blkdiag\left(\bm{0}_{dn_l\times dn_l},\{\bm{I}_d-\bm{h}_i\bm{h}_i^\top\}_{i\in \mc{V}_f}\right)\\
&\qquad\times k_2\left(\bar{\bm{H}}^\top(\bm{g}-\bm{g}^*)-\bm\xi\right)\\
&=-\bm{Z}\bm{D}_{\bm{h}_i^\perp}k_2\left(\bar{\bm{H}}^\top(\bm{g}-\bm{g}^*)-\bm\xi\right).
\end{align*}

We first have the following lemma.
\begin{Lemma}\label{lm:bearing_pos_semidef_ineqs}\cite[Lem. 2]{Zhao2019tac}
There holds that
\begin{align}
\bm{p}^\top\bar{\bm H}^\top(\bm{g}-\bm{g}^*)&\geq 0,\label{eq:bearing_pos_semidef_ineq1}\\
(\bm{p}-\bm{p}^*)^\top\bar{\bm H}^\top(\bm{g}-\bm{g}^*)&\geq 0,\label{eq:bearing_pos_semidef_ineq2}
\end{align} where, when no agents coincide in $\bm{p}^*$ or $\bm{p}$, the equalities hold if and only if $\bm{g}=\bm{g}^*$.
\end{Lemma}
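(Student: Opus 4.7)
The plan is to reduce both inequalities to edge-wise calculations by using the identity $\bar{\bm H}\bm p=\bm z$, which turns $\bm p^\top\bar{\bm H}^\top(\bm g-\bm g^*)$ into $\bm z^\top(\bm g-\bm g^*)=\sum_{k=1}^m \bm z_k^\top(\bm g_k-\bm g_k^*)$, and similarly the second inner product into $\sum_{k=1}^m (\bm z_k-\bm z_k^*)^\top(\bm g_k-\bm g_k^*)$. Once the global sum is split into per-edge contributions, each summand is a short scalar computation involving only the two unit vectors $\bm g_k,\bm g_k^*\in\mathbb{R}^d$ and the positive scalars $\|\bm z_k\|,\|\bm z_k^*\|$, so no graph-theoretic or rigidity machinery is needed.

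For the first inequality I would substitute $\bm z_k=\|\bm z_k\|\bm g_k$ and compute
\[
\bm z_k^\top(\bm g_k-\bm g_k^*)=\|\bm z_k\|\bigl(1-\bm g_k^\top\bm g_k^*\bigr).
\]
Since $\bm g_k$ and $\bm g_k^*$ are unit vectors, the Cauchy--Schwarz inequality gives $\bm g_k^\top\bm g_k^*\le 1$, so each term is nonnegative, and vanishes precisely when $\bm g_k=\bm g_k^*$ (assuming $\|\bm z_k\|>0$, guaranteed by the no-coincidence hypothesis). Summing over $k$ yields \eqref{eq:bearing_pos_semidef_ineq1} with the stated equality condition.

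For the second inequality I would use the same substitution for both $\bm z_k$ and $\bm z_k^*=\|\bm z_k^*\|\bm g_k^*$, obtaining
\[
(\bm z_k-\bm z_k^*)^\top(\bm g_k-\bm g_k^*)=\bigl(\|\bm z_k\|+\|\bm z_k^*\|\bigr)\bigl(1-\bm g_k^\top\bm g_k^*\bigr),
\]
after cancellation of the $\|\bm z_k\|$ and $\|\bm z_k^*\|$ terms that appear with opposite signs. The conclusion follows from exactly the same unit-vector argument as before, and the equality condition transfers term-by-term since both prefactors $\|\bm z_k\|+\|\bm z_k^*\|$ are strictly positive under the no-coincidence assumption.

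There is no real obstacle; the only subtlety is bookkeeping the edge orientations encoded in $\bar{\bm H}$, but this is harmless because each bearing is defined consistently with the chosen orientation of its edge, so the sign of $\bm z_k$ and $\bm g_k$ flip together and the products $\bm z_k^\top\bm g_k=\|\bm z_k\|$ and $\bm z_k^\top\bm g_k^*$ are invariant under reorientation. Hence the proof is essentially a one-line-per-inequality scalar estimate on each edge, aggregated by the linear identity $\bar{\bm H}\bm p=\bm z$.
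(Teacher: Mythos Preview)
Your argument is correct; the edge-wise decomposition via $\bar{\bm H}\bm p=\bm z$ and the scalar identity $(\bm z_k-\bm z_k^*)^\top(\bm g_k-\bm g_k^*)=(\|\bm z_k\|+\|\bm z_k^*\|)(1-\bm g_k^\top\bm g_k^*)$ is exactly the standard computation. The paper does not supply its own proof of this lemma but simply quotes it from \cite[Lem.~2]{Zhao2019tac}, where the proof proceeds along the same lines as yours.
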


Based on Lyapunov stability theory, Barbalat's lemma and the invariance principle, the convergence of the system \eqref{eq:compact_form} is then shown in the following theorem.
\begin{Theorem}\label{thm:bearing_only_asympt_converg}
Suppose that Assumption \ref{ass:bearing_rigid} holds. Under the tracking control law \eqref{eq:tracking_control_law_i}, $\bm p(t)\rightarrow \bm p^*(t)$ {globally and} asymptotically as $t\rightarrow \infty$.
\end{Theorem}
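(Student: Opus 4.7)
The plan is a composite Lyapunov argument combined with LaSalle's invariance principle on an autonomous error system. First, I would introduce the error variables $\bm\delta_p = \bm p - \bm p^*$ (vanishing on the leader block by construction), $\bm\delta_\xi = \bm\xi_f - \bm 1_{n_f}\otimes \bm v_c$, and the heading misalignment $1 - \bm h_c^\top \bm h_i$ for each follower $i\in \mc V_f$. A key preliminary observation is that the desired relative positions $\bm p_j^*(t) - \bm p_i^*(t)$ are time-invariant (all target trajectories share $\bm v_c$), so $\bm g$ is a function of $\bm\delta_p$ alone; together with the identity $\bm r = -\bar{\bm H}^\top(\bm g - \bm g^*)$ (which follows from the incidence matrix convention), this makes the closed-loop error system autonomous.

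Next, I would propose a composite Lyapunov candidate of the form
\[
V = \tfrac{k_1}{2}\|\bm\delta_p\|^2 + \tfrac{1}{2}\|\bm\delta_\xi\|^2 + \tfrac{2k_1}{k_2}\sum_{i\in \mc V_f}\!\bigl(1 - \bm h_c^\top \bm h_i\bigr),
\]
with the precise weights to be tuned so that cross-couplings cancel. Computing $\dot V$ along \eqref{eq:compact_form}, Lemma \ref{lm:bearing_pos_semidef_ineqs} immediately yields the nonpositive bearing-error contribution through $(\bm p - \bm p^*)^\top\bar{\bm H}^\top(\bm g - \bm g^*) \geq 0$. The troublesome terms are those of the form $(\bm I_d - \bm h_i\bm h_i^\top)\bm v_c$ that appear in both $\dot{\bm\delta}_{p_f}$ and $\dot{\bm\delta}_\xi$ and measure how far follower $i$'s heading is from being aligned with $\bm v_c$; these must be absorbed by the heading-energy dissipation $-k_1 u_c\sum_{i\in \mc V_f}\|(\bm I_d - \bm h_i\bm h_i^\top)\bm h_c\|^2$ arising from the last term of $V$. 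Showing $\dot V \leq 0$ is the main technical computation.

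Once $\dot V \leq 0$ is in hand, $V$ bounds $\bm\delta_p$, $\bm\delta_\xi$, and $\bm h_i - \bm h_c$, so all trajectories stay bounded. Applying LaSalle's invariance principle to the autonomous error system, I would characterize the largest invariant set in $\{\dot V = 0\}$: the dissipation structure should force $\bm h_i = \bm h_c$, $\bm\delta_\xi = \bm 0$, and $\bm g = \bm g^*$ for every follower. Invoking Assumption \ref{ass:bearing_rigid} together with the positive-definiteness of $\bm{\mc B}_{ff}$, the identity $\bm g = \bm g^*$ combined with $\bm p_i = \bm p_i^*$ for the $n_l \geq 2$ leaders pins down $\bm p = \bm p^*$, yielding the claimed global asymptotic convergence.

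The main obstacle will be the careful choice of weights in $V$ needed to make every cross-coupling term between position, velocity-estimation, and heading errors dissipative. The perpendicular component of $\bm v_c$ with respect to $\bm h_i$ enters the three error dynamics with distinct structures, so the relative weights of the three summands in $V$ have to be tuned precisely; a sign convention $u_c > 0$ (absorbing the sign into $\bm h_c$) is likely required so that the heading-misalignment term of $V$ provides the right dissipation, and collision avoidance (so that bearings remain well-defined along the trajectory) must be argued separately to keep the whole construction valid.
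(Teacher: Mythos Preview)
Your overall architecture (composite Lyapunov function, LaSalle, then rigidity) matches the paper, but the position--error part of your candidate $V$ will not work, and this is not a matter of tuning weights. Differentiating $\tfrac{k_1}{2}\lVert\bm\delta_p\rVert^2$ along the closed loop produces, for each follower,
\[
k_1\,\bm\delta_{p_i}^\top\bm h_i\bm h_i^\top\bigl(k_1\bm r_i+\bm\xi_i\bigr)-k_1\,\bm\delta_{p_i}^\top\bm v_c,
\]
and the leading term $k_1^2\,\bm\delta_{p_i}^\top\bm h_i\bm h_i^\top\bm r_i$ is sign--indefinite. Lemma~\ref{lm:bearing_pos_semidef_ineqs} controls $\sum_i\bm\delta_{p_i}^\top\bm r_i=-\bm\delta_p^\top\bar{\bm H}^\top(\bm g-\bm g^*)$, \emph{not} its projected version $\sum_i\bm\delta_{p_i}^\top\bm h_i\bm h_i^\top\bm r_i$; the nonholonomic projector $\bm h_i\bm h_i^\top$ destroys the inequality. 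Since $\bm\delta_{p_i}$ appears nowhere else in $\dot V$, no choice of weights on the $\bm\xi$-- or $\bm h$--terms can cancel or dominate this contribution.

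The paper resolves precisely this obstruction by replacing $\tfrac{1}{2}\lVert\bm\delta_p\rVert^2$ with the bearing potential $\bm z^\top(\bm g-\bm g^*)$. Because $\bm z^\top\dot{\bm g}=0$, its derivative is $(\bm g-\bm g^*)^\top\bar{\bm H}\dot{\bm p}$, which pairs $\dot{\bm p}_i$ with $\bm r_i$ rather than with $\bm\delta_{p_i}$; the projector is then harmless, yielding the manifestly nonpositive term $-k_1\sum_i(\bm h_i^\top\bm r_i)^2$. With weights $\tfrac12$ on $\lVert\bm\xi-\bm 1_n\otimes\bm v_c\rVert^2$ and $\tfrac{u_c}{2k_2}$ on $\lVert\bm h-\bm 1_n\otimes\bm h_c\rVert^2$, every cross term cancels \emph{exactly} (no inequality is needed), giving $\dot V=-k_1\sum_i(\bm h_i^\top\bm r_i)^2-\sum_i\bm\xi_i^\top(\bm I_d-\bm h_i\bm h_i^\top)\bm\xi_i$. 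Lemma~\ref{lm:bearing_pos_semidef_ineqs} is used only for $V\ge 0$, and boundedness of $\bm\delta_p$ is recovered a posteriori from the quadratic estimate~\eqref{eq:quadratic_ineq_in_pos_err}. Note also that the invariant--set analysis is more delicate than you sketch: $\dot V=0$ only gives $\bm h_i^\top\bm r_i=0$ and $(\bm I_d-\bm h_i\bm h_i^\top)\bm\xi_i=\bm 0$, from which the paper infers constant follower velocities $\bm v_i=\bm\xi_i$, uses Barbalat on $\bm h_i$, and handles the degenerate case $\bm\xi_i=\bm 0$ by a separate geometric argument before concluding $\bm g=\bm g^*$.
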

\begin{proof}
Consider the Lyapunov function \begin{align*} \label{eq:Lyapunov_funct}
V=\bm{z}^\top&(\bm{g}-\bm{g}^*)+\frac{1}{2}\abs{\bm\xi-\bm{1}_n\otimes\bm{v}_c}^2\\
&+\frac{u_c}{(2k_2)}\abs{\bm{h}-\bm{1}_n\otimes \bm{h}_c}^2,\numberthis
\end{align*} where $u_c=\abs{\bm{v}_c}$.
Note that $\bm{z}^\top(\bm{g}-\bm{g}^*)=\bm{p}^\top\bar{\bm H}^\top(\bm{g}-\bm{g}^*)\geq 0$ and $\bm{z}^\top(\bm{g}-\bm{g}^*)=0$ if and only if $\bm{g}=\bm{g}^*$ (Lemma \ref{lm:bearing_pos_semidef_ineqs}), or equivalently $\bm{p}=\bm{p}^*$ under Assumption \ref{ass:bearing_rigid}. $V(t)=0$ if and only if $\bm{p}_f=\bm{p}_f^*$, $\bm{\xi}=\bm{1}_{n}\otimes\bm{v}_c$, and $\bm{h}=\bm{1}_{n}\otimes\bm{h}_c$.

The derivative of $V$ along the trajectory of \eqref{eq:compact_form} is given as
\begin{align*}
\dot{V}=&~\bm{z}^\top\dot{\bm{g}}+(\bm{g}-\bm{g}^*)^\top\bar{\bm H}\dot{\bm p}+(\bm\xi-\bm{1}_n\otimes\bm{v}_c)^\top\dot{\bm\xi}\\
&\qquad+\frac{u_c}{k_2}(\bm{h}-\bm{1}_n\otimes \bm{h}_c)^\top\dot{\bm h}\\
=&~(\bm{g}-\bm{g}^*)^\top\bar{\bm H}(\bm{I}_{dn}-\bm{Z})(\bm{1}_{n}\otimes \bm{v}_c)\\
&-(\bm{g}-\bm{g}^*)^\top\bar{\bm H}\bm{Z}\bm{D}_{\bm{h}_i}k_1\bar{\bm{H}}^\top(\bm{g}-\bm{g}^*)
\\
&+(\bm{g}-\bm{g}^*)^\top\bar{\bm H}\bm{Z}\bm{D}_{\bm{h}_i}\bm{\xi}-\bm\xi^\top\bm{Z}\bm{D}_{\bm{h}_i}\bar{\bm{H}}^\top(\bm{g}-\bm{g}^*)\\
&-\bm\xi^\top\bm{ZD}_{\bm{h}_i^\perp}\bm\xi+(\bm{1}_n\otimes\bm{v}_c)^\top\bm{Z}\bm{D}_{\bm{h}_i}\bar{\bm{H}}^\top(\bm{g}-\bm{g}^*)\\
&+(\bm{1}_{n}\otimes\bm{v}_c)^\top\bm{ZD}_{\bm{h}_i^\perp}\bm\xi
+(\bm{1}_n\otimes\bm{v}_c)^\top\bm{ZD}_{\bm{h}_i^\perp}\bar{\bm{H}}^\top\\
&\times(\bm{g}-\bm{g}^*)-(\bm{1}_{n}\otimes\bm{v}_c)^\top\bm{ZD}_{\bm{h}_i^\perp}\bm\xi\\
=&~(\bm{1}_{n}\otimes \bm{v}_c)^\top(\bm{I}_{dn}-\bm{Z})\bar{\bm{H}}^\top(\bm{g}-\bm{g}^*)\\
&-(\bm{g}-\bm{g}^*)^\top\bar{\bm H}\bm{Z}\bm{D}_{\bm{h}_i}k_1\bar{\bm{H}}^\top(\bm{g}-\bm{g}^*)-\bm\xi^\top\bm{ZD}_{\bm{h}_i^\perp}\bm\xi\\
&+(\bm{1}_n\otimes\bm{v}_c)^\top\bm{Z}(\bm{D}_{\bm{h}_i^\perp}+\bm{D}_{\bm{h}_i})\bar{\bm{H}}^\top(\bm{g}-\bm{g}^*), \numberthis
\end{align*}
where, in the second equality we have used 
\begin{align*}
\bm{z}^\top\dot{\bm{g}}&=\bm{z}^\top\blkdiag\big({\bm{P}_{\bm{g}_1}}/{\abs{\bm{z}_1}},\ldots,{\bm{P}_{\bm{g}_m}}/{\abs{\bm{z}_m}}\big)\dot{\bm z}\\
&=\blkdiag\big({\bm{z}_1^\top\bm{P}_{\bm{g}_1}}/{\abs{\bm{z}_1}},\ldots,{\bm{z}_m^\top\bm{P}_{\bm{g}_m}}/{\abs{\bm{z}_m}}\big)\dot{\bm z}\\
&=0
\end{align*} and $\bm{h}^\top\dot{\bm h}=(1/2)\frac{d}{dt}\abs{\bm h}^2=0$, and the last equality follows from $\bar{\bm H}\bm{Z}\bm{D}_{\bm{h}_i}=(\bm{Z}\bm{D}_{\bm{h}_i}\bar{\bm{H}}^\top)^\top$. Using the facts that $\bm{D}_{\bm{h}_i}+\bm{D}_{\bm{h}_i^\perp}=\bm{I}_{dn}$ and $\bar{\bm H}(\bm{1}_n\otimes\bm{v}_c)=\bm{0}$ (since $\mathrm{null}(\bar{\bm H})=\mathrm{span}(\bm{1}_n\otimes \bm{I}_d)$), we further obtain that
\begin{align*}
\dot{V}&=-k_1(\bm{g}-\bm{g}^*)^\top\bar{\bm H}\bm{Z}\bm{D}_{\bm{h}_i}\bar{\bm{H}}^\top(\bm{g}-\bm{g}^*)-\bm\xi^\top\bm{ZD}_{\bm{h}_i^\perp}\bm\xi\\
&=-\sum_{i\in\mc{V}_f}\left(k_1\bm{r}_i^\top\bm{h}_i\bm{h}_i^\top\bm{r}_i+\bm{\xi}_i^\top(\bm{I}_d-\bm{h}_i\bm{h}_i^\top )\bm{\xi}_i\right)\\
&\leq 0. \numberthis \label{eq:dot_V}
\end{align*}
It follows that $V(t)\leq V(0)$ for all $t$. Thus, $\bm{z}^\top(\bm{g}-\bm{g}^*)$ and $\bm\xi(t)$ are always bounded. By this and the quadratic inequality 
\begin{equation}\label{eq:quadratic_ineq_in_pos_err}
2\abs{\bar{\bm H}}(\abs{\bm{\delta}_p}+\abs{\tilde{\bm p}^*})\bm{z}^\top(\bm{g}-\bm{g}^*)\geq {\lambda_{\min}(\bm{\mc{B}}_{ff})\abs{\bm{\delta}_p}^2}{}
\end{equation} in the position error $\abs{\bm{\delta}_p}$ \cite[Coroll. 2]{Zhao2019tac}, we have that $\abs{\bm\delta_{p_f}(t)}$ is always bounded. As a result, by LaSalle's invariance principle \cite{Khalil2002}, $\bm\delta_{p_f}(t)$ and $\bm\xi_f(t)$ asymptotically converge to the invariant set where $\dot{V}=0$. From $\dot{V}=0$, and the idempotence property of projection matrices that $\bm{Z}\bm{D}_{\bm{h}_i}=(\bm{Z}\bm{D}_{\bm{h}_i})^2$ and $\bm{ZD}_{\bm{h}_i^\perp}=(\bm{ZD}_{\bm{h}_i^\perp})^2$, one has
\begin{align}
&\bm{Z}\bm{D}_{\bm{h}_i}\bar{\bm{H}}^\top(\bm{g}-\bm{g}^*)=\bm{0}, \label{eq:dot_V_invariant1}\\
& \bm{ZD}_{\bm{h}_i^\perp}\bm\xi=\bm{0}.\label{eq:dot_V_invariant2}
\end{align}
Substituting the preceding equalities into \eqref{eq:compact_form} gives
\begin{align*}
\dot{\bm p}&\stackrel{\eqref{eq:dot_V_invariant2}}{=}\col\left(\bm{1}_{n_l}\otimes \bm{v}_c,\bm{0}_{dn_f}\right)+\bm{Z}\bm{D}_{\bm{h}_i}\bm\xi+\bm{ZD}_{\bm{h}_i^\perp}\bm\xi\\
&=\col\left(\bm{1}_{n_l}\otimes \bm{v}_c,\bm\xi_f\right), \numberthis\label{eq:dot_p_inf}\\
\dot{\bm \xi}&=\bm{0},\numberthis \label{eq:dot_xi_inf}\\
\dot{\bm h}&=-k_2\bm{Z}\bm{D}_{\bm{h}_i^\perp}\bar{\bm{H}}^\top(\bm{g}-\bm{g}^*)\stackrel{\eqref{eq:dot_V_invariant1}}{=} -k_2\bm{Z}\bar{\bm{H}}^\top(\bm{g}-\bm{g}^*).\numberthis\label{eq:dot_h_inf}
\end{align*}
It follows from \eqref{eq:dot_p_inf} and \eqref{eq:dot_xi_inf} that the followers' velocities {satisfy} $\bm{v}_i=\bm{\xi}_i, \forall i\in \mc{V}_f$, {and} are all constant vectors. Thus, for any $k\in\mc{V}_f$ with $\bm{\xi}_k\neq \bm 0$, its heading $\bm{h}_k={\bm{v}_k}/{\abs{\bm{v}_k}}$ is time-invariant, and hence $\dot{\bm{h}}_k(t)\rightarrow\bm{0}$ by Barbalat's lemma (from \eqref{eq:traject_under_control_law_i}, $\ddot{\bm{h}}_k(t)$ is bounded due to the boundedness of $\dot{\bm h}_k,\dot{\bm r}_k$ and $\dot{\bm\xi}_k$). Next, suppose that $\bm{\xi}_i=\bm 0$ for certain $i\in \mc{V}_f$, i.e., agent $i$ does not translate. If $\bm r_i \equiv\bm 0$ for such an agent then $\dot{\bm{h}}_i\stackrel{\eqref{eq:dot_h_inf}}{=}k_2\bm r_i\equiv\bm{0}$. Otherwise, let us assume $\bm r_i \neq \bm 0$. Then, by \eqref{eq:dot_V_invariant1} and \eqref{eq:tracking_control_law_i}, one has $\bm h_i^\top \bm r_i=0$ and $\bm\omega_i=k_2(\bm{h}_i\times\bm{r}_i)$, respectively. That is, $\bm{h}_i$ keeps rotating while always maintaining that $\bm h_i \perp \bm r_i$. This is not possible since geometrically, the control $\bm\omega_i=k_2(\bm{h}_i\times\bm{r}_i)$ keeps steering $\bm{h}_i$ to align with $\bm{r}_i$. Thus, in either case, one has $\dot{\bm{h}}_i=\bm 0$. By $\dot{\bm h}=\bm 0$ and \eqref{eq:dot_h_inf}, we obtain that $\bm{Z}\bar{\bm{H}}^\top(\bm{g}-\bm{g}^*)=\bm 0$. This further leads to 
\begin{equation}
\bm\delta_p\bm{Z}\bar{\bm{H}}^\top(\bm{g}-\bm{g}^*)=\bm 0 \Leftrightarrow \bm\delta_p\bar{\bm{H}}^\top(\bm{g}-\bm{g}^*)=\bm 0.
\end{equation} 
It then follows from Lemma \ref{lm:bearing_pos_semidef_ineqs} that $\bm{g}=\bm{g}^*$ {(since no agents coincide in $\bm{p}^*$)}, or equivalently, the agents achieve the target formation {globally and} asymptotically. Note that in order to satisfy $\bm{g}=\bm{g}^*$, or i.e., the desired formation pattern is maintained, the constant velocities of the agents must be the same, i.e., $\bm{v}_i=u_i\bm{h}_i=\bm{v}_c,\forall i\in \mc{V}_f$. This also implies that $\bm{h}_i\rightarrow\bm{h}_c$ {and $\bm\xi_i\rightarrow \bm{v}_c$ asymptotically} as $t\rightarrow \infty,\forall i\in\mc{V}_f$.
\end{proof}

\subsection{Sufficient condition for collision avoidance}\label{subsec:collision_avoid}
Let $\kappa$ be the desired minimum distance between any two agents satisfying $0<\kappa<\min_{i,j\in\mc{V}}\abs{\bm{p}_i^*-\bm{p}_j^*}$, and 
\begin{equation}\label{eq:epsilon}
\epsilon:=({1}/{\sqrt{n}})(\min_{i,j\in\mc{V}}\abs{\bm{p}_i^*-\bm{p}_j^*}-\kappa).
\end{equation}
 Following a similar argument as in \cite[Coroll. 3]{Zhao2019tac}, we now give a sufficient condition for inter-agent collision avoidance. 
\begin{Corollary}\label{coroll:no_collision} Suppose that Assumption \ref{ass:bearing_rigid} holds. Under the tracking control law \eqref{eq:tracking_control_law_i},
if initially $V(0)\leq \beta$ for a sufficiently small constant $\beta>0$ such that
\begin{equation}
(\gamma\beta+\sqrt{\gamma^2\beta^2+4\gamma\beta\abs{\tilde{\bm p}^*}})/2\leq\epsilon,
\end{equation} where $\gamma=(2\abs{\bar{\bm H}})/\lambda_{\min}(\bm{\mc{B}}_{ff})$, then $\forall i,j\in \mc{V}, i\neq j$, there holds $\abs{\bm{p}_i-\bm{p}_j}\geq \kappa$ for all $t\geq 0$.
\end{Corollary}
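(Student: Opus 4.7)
The plan is to piggy-back on the Lyapunov machinery already assembled for Theorem~\ref{thm:bearing_only_asympt_converg}. The strategy is three-step: (i) use the monotonicity of $V$ to bound the bearing-error inner product $\bm{z}^\top(\bm{g}-\bm{g}^*)$ uniformly in time by $\beta$, (ii) feed that bound into the coercive quadratic inequality \eqref{eq:quadratic_ineq_in_pos_err} to extract a time-uniform bound on $\abs{\bm\delta_p(t)}$, and (iii) finish via the triangle inequality on the inter-agent distances. The first step is immediate: by \eqref{eq:dot_V} we have $\dot{V}\leq 0$ along \eqref{eq:compact_form}, and each of the three summands of $V$ in \eqref{eq:Lyapunov_funct} is non-negative (the first by Lemma~\ref{lm:bearing_pos_semidef_ineqs}, the other two being squared norms), whence $\bm{z}^\top(\bm{g}(t)-\bm{g}^*)\leq V(t)\leq V(0)\leq \beta$ for every $t\geq 0$.

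Next I would substitute this bound into \eqref{eq:quadratic_ineq_in_pos_err} and divide through by $\lambda_{\min}(\bm{\mc{B}}_{ff})$, obtaining the scalar quadratic inequality
\[
\abs{\bm\delta_p(t)}^2 - \gamma\beta\,\abs{\bm\delta_p(t)} - \gamma\beta\,\abs{\tilde{\bm p}^*} \leq 0.
\]
The larger root of the associated quadratic equation yields
\[
\abs{\bm\delta_p(t)} \leq \frac{\gamma\beta + \sqrt{\gamma^2\beta^2 + 4\gamma\beta\,\abs{\tilde{\bm p}^*}}}{2} \leq \epsilon,
\]
for all $t\geq 0$, where the final inequality is exactly the hypothesis imposed on $\beta$. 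This is the step that carries the real content: the coercivity of \eqref{eq:quadratic_ineq_in_pos_err} in $\abs{\bm\delta_p}$ relies on $\bm{\mc{B}}_{ff}\succ 0$, which is itself what Assumption~\ref{ass:bearing_rigid} together with $n_l\geq 2$ delivers. Without this coercivity, the Lyapunov bound would only give information about the bearing error, not about the raw positional displacement, and the whole reduction would fail.

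Finally, for any $i\neq j$ I would write the triangle inequality $\abs{\bm{p}_i-\bm{p}_j} \geq \abs{\bm{p}_i^*-\bm{p}_j^*} - \abs{(\bm{p}_i-\bm{p}_i^*)-(\bm{p}_j-\bm{p}_j^*)}$ and invoke the elementary estimate $\abs{(\bm{p}_i-\bm{p}_i^*)-(\bm{p}_j-\bm{p}_j^*)}\leq \sqrt{2}\,\abs{\bm\delta_p}$, which follows from $\abs{\bm\delta_p}^2 = \sum_k \abs{\bm{p}_k-\bm{p}_k^*}^2$. Combined with $\abs{\bm\delta_p(t)}\leq \epsilon$ and the definition \eqref{eq:epsilon} of $\epsilon$, this gives
\[
\abs{\bm{p}_i(t)-\bm{p}_j(t)} \geq \min_{k,l\in\mc{V}}\abs{\bm{p}_k^*-\bm{p}_l^*} - \sqrt{2/n}\,\bigl(\min_{k,l\in\mc{V}}\abs{\bm{p}_k^*-\bm{p}_l^*}-\kappa\bigr) \geq \kappa,
\]
since $\sqrt{2/n}\leq 1$ for $n\geq 2$ (which is automatic because $n\geq n_l\geq 2$). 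This closes the argument. I expect the triangle-inequality step to be routine; the only non-cosmetic work lies in the passage from $V\leq\beta$ to the norm bound on $\bm\delta_p$, which is why the quantitative form of \eqref{eq:quadratic_ineq_in_pos_err} borrowed from \cite{Zhao2019tac} is doing the heavy lifting.
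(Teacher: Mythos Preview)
Your proof is correct and follows essentially the same three-step approach as the paper: (i) use $\dot V\le 0$ to bound $\bm z^\top(\bm g-\bm g^*)\le\beta$, (ii) solve the quadratic inequality \eqref{eq:quadratic_ineq_in_pos_err} for $\abs{\bm\delta_p}$, and (iii) finish by a triangle-inequality estimate on $\abs{\bm p_i-\bm p_j}$. The only cosmetic difference is in step (iii): the paper bounds $\abs{\bm p_i-\bm p_i^*}+\abs{\bm p_j-\bm p_j^*}\le\sum_k\abs{\bm p_k-\bm p_k^*}\le\sqrt{n}\,\abs{\bm\delta_p}$, which matches the $1/\sqrt{n}$ factor in the definition \eqref{eq:epsilon} of $\epsilon$ exactly, whereas you use the sharper two-term bound $\sqrt{2}\,\abs{\bm\delta_p}$ and then absorb the slack via $\sqrt{2/n}\le 1$; both routes are valid and yours in fact shows that the $1/\sqrt{n}$ in \eqref{eq:epsilon} could be relaxed to $1/\sqrt{2}$.
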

\begin{proof}
It follows from $\dot{V}\leq 0$ that $V(t)\leq \beta$ for all $t\geq 0$. Thus, $\bm{z}^\top(\bm{g}-\bm{g}^*)\leq \beta,\forall t\geq 0$. By \eqref{eq:quadratic_ineq_in_pos_err}, one therefore has that
\begin{equation*}
{\abs{\bm{\delta}_p}^2}\leq \beta\gamma(\abs{\bm{\delta}_p}+\abs{\tilde{\bm p}^*}),
\end{equation*}
where $\gamma = (2\abs{\bar{\bm H}})/\lambda_{\min}(\bm{\mc{B}}_{ff})$. The preceding inequality indicates that $\abs{\bm{\delta}_p}\in [0,\phi]$, where $\phi =(\gamma\beta+\sqrt{\gamma^2\beta^2+4\gamma\beta\abs{\tilde{\bm p}^*}})/2$. It then follows that $\phi\leq \epsilon$ for a sufficiently small constant $\beta>0$. Consequently, $\abs{\bm{\delta}_p}\leq\epsilon$ for all $t\geq 0$ since it always holds that $V(t)\leq \beta$ if initially $V(0)\leq \beta$. We thus obtain
\begin{align*}
\abs{\bm{p}_i-\bm{p}_j}&=\abs{\bm{p}_i-\bm{p}_i^*-(\bm{p}_j-\bm{p}_j^*)+\bm{p}_i^*-\bm{p}_j^*}\\
&\geq \abs{\bm{p}_i^*-\bm{p}_j^*}-\abs{\bm{p}_i-\bm{p}_i^*}-\abs{\bm{p}_j-\bm{p}_j^*}\\
&\geq  \abs{\bm{p}_i^*-\bm{p}_j^*}-\textstyle\sum_{k=1}^n\abs{\bm{p}_k-\bm{p}_k^*}\\
&\geq \abs{\bm{p}_i^*-\bm{p}_j^*}-\sqrt{n}\abs{\bm\delta_{p}}\\
&\stackrel{\eqref{eq:epsilon}}{\geq} \kappa,\numberthis
\end{align*}
for all $t\geq 0$ and for all $i,j\in \mc{V}$. This shows that no agents collide with each other. As a result, by Theorem \ref{thm:bearing_only_asympt_converg} the agents achieve the target formation asymptotically.
\end{proof}
\section{Bearing-based Formation Tracking Control}\label{sec:disp_tracking_control_law}
The bearing-based formation tracking control is investigated in this section. Each follower $i\in \mc{V}_f$ measures only the displacement vectors $\bm{z}_{ij}=\bm p_j - \bm p_i$, and knows the desired bearings $\bm{g}_{ij}^*$ with regard to its neighbors $j\in \mc{N}_i$. 
\subsection{Proposed formation tracking control protocol}
Let $\bm{r}_i=-\sum_{j\in \mc{N}_i}\bm{P}_{\bm{g}_{ij}^*}(\bm{p}_i-\bm{p}_j)$ for each $i\in \mc{V}_f$. We propose the following formation tracking control law
\begin{equation}\label{eq:disp_tracking_control_law_i}
\begin{cases}
u_i=\bm{h}_i^\top( k_1\bm{r}_i+\bm{\xi}_i )\\
\dot{\bm{\xi}}_i=\bm{h}_i\bm{h}_i^\top \bm{r}_i-(\bm{I}_d-\bm{h}_i\bm{h}_i^\top )\bm{\xi}_i\\
\bm\omega_i=\bm{h}_i\times k_2(\bm{r}_i+\bm{\xi}_i),
\end{cases}
\end{equation}
where $k_1,k_2>0$, and $\bm{\xi}_i(0){\in \mb{R}^d}$ for all $i\in \mc{V}_f$. {The preceding control law has a similar structure as \eqref{eq:tracking_control_law_i}, but with control vectors $\bm r_i$ designed based on the displacements $\bm z_{ij}$. Thus, \eqref{eq:disp_tracking_control_law_i} utilizes in addition the inter-agent distances. In the sequel, we show that the controller \eqref{eq:disp_tracking_control_law_i} steers the nonholonomic agents to the target formation asymptotically. Therefore, it solves the bearing-based formation tracking for nonholonomic agents, compared with \cite{Zhao2017,YHuang2021tcns}.}

Substituting the preceding control law into \eqref{eq:unicycle_model} and by collecting them for all agents, we get
\begin{equation}\label{eq:disp_compact_form}
\begin{cases}
\dot{\bm p}&=(\bm{I}_{dn}-\bm{Z})(\bm{1}_n\otimes \bm{v}_c)-\bm{Z}\bm{D}_{\bm{h}_i}(k_1\mc{\bm B}\bm\delta_p-\bm\xi),\\
\dot{\bm\xi}&=-\bm{Z}\bm{D}_{\bm{h}_i}\mc{\bm B}\bm\delta_p-\bm{Z}\bm{D}_{\bm{h}_i^\perp}\bm\xi\\
\dot{\bm h}&=-\bm{Z}\bm{D}_{\bm{h}_i^\perp}k_2(\mc{\bm B}\bm\delta_p-\bm\xi)
\end{cases}
\end{equation}
where $\bm\delta_p=\bm{p}-\bm{p}^*$, and the matrices $\bm{Z},\bm{D}_{\bm{h}_i^\perp},\bm{D}_{\bm{h}_i}$ and $\bm \xi$ are defined as in \eqref{eq:stack_variable_def}.
\subsection{Stability analysis}
We have the following theorem, which is the main result of this section.
\begin{Theorem}\label{thm:bearing_based_tracking_control}
Suppose that Assumption \ref{ass:bearing_rigid} holds. Under the tracking control law \eqref{eq:disp_tracking_control_law_i}, $\bm p(t)\rightarrow \bm p^*(t)$ {globally and} asymptotically as time diverges.
\end{Theorem}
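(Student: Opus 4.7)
The plan is to adapt the Lyapunov construction of Theorem \ref{thm:bearing_only_asympt_converg} to the displacement-based control \eqref{eq:disp_tracking_control_law_i}, replacing the bearing-error cost $\bm{z}^\top(\bm{g}-\bm{g}^*)$ with the quadratic form that is natural for $\bm{r}_i=-[\bm{\mc{B}}\bm\delta_p]_i$. Specifically, I would take
\begin{align*}
V=\frac{1}{2}\bm\delta_p^\top\bm{\mc{B}}\bm\delta_p+\frac{1}{2}\abs{\bm\xi-\bm{1}_n\otimes\bm{v}_c}^2+\frac{u_c}{2k_2}\abs{\bm{h}-\bm{1}_n\otimes\bm{h}_c}^2.
\end{align*}
Since $\bm\delta_p=\col(\bm 0_{dn_l},\bm\delta_{p_f})$ and $\bm{\mc{B}}_{ff}$ is positive definite under Assumption \ref{ass:bearing_rigid}, the first term reduces to $\frac{1}{2}\bm\delta_{p_f}^\top\bm{\mc{B}}_{ff}\bm\delta_{p_f}$, so $V\geq 0$, with $V=0$ iff $\bm\delta_{p_f}=\bm 0$, $\bm\xi=\bm{1}_n\otimes\bm{v}_c$, and $\bm{h}=\bm{1}_n\otimes\bm{h}_c$.

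Next I would differentiate $V$ along \eqref{eq:disp_compact_form}, using the same bag of identities that drove the bearing-only proof: (i) $\bm{\mc{B}}(\bm{1}_n\otimes\bm{v}_c)=\bm 0$ from the null-space property of the bearing Laplacian; (ii) $\bm{h}^\top\bm{D}_{\bm{h}_i^\perp}=\bm 0$ since each $\bm{h}_i$ lies in the kernel of its own orthogonal projector; (iii) $\bm{D}_{\bm{h}_i}+\bm{D}_{\bm{h}_i^\perp}=\bm{I}_{dn}$; (iv) $u_c(\bm{1}_n\otimes\bm{h}_c)=\bm{1}_n\otimes\bm{v}_c$; plus the symmetry of $\bm{\mc{B}}$ and the commutation $\bm{Z}\bm{D}_{\bm{h}_i}=\bm{D}_{\bm{h}_i}\bm{Z}$. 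The cross-terms involving $\bm{1}_n\otimes\bm{v}_c$ are expected to cancel pairwise across the three Lyapunov components, and the indefinite coupling $\bm\delta_p^\top\bm{\mc{B}}\bm{Z}\bm{D}_{\bm{h}_i}\bm\xi$ coming from the $\dot{\bm p}$ contribution should cancel against its transpose appearing in the $\dot{\bm\xi}$ contribution, leaving
\begin{align*}
\dot{V}=-k_1\sum_{i\in\mc{V}_f}(\bm{h}_i^\top\bm{r}_i)^2-\sum_{i\in\mc{V}_f}\abs{(\bm{I}_d-\bm{h}_i\bm{h}_i^\top)\bm\xi_i}^2\leq 0,
\end{align*}
the direct structural analogue of \eqref{eq:dot_V}.

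From $\dot{V}\leq 0$, $V$ is bounded, hence so are $\bm\delta_{p_f}$ (via $\bm{\mc{B}}_{ff}\succ 0$), $\bm\xi$, and $\bm{h}$. LaSalle's invariance principle then confines the trajectory to the largest invariant set in $\{\dot{V}=0\}$, which, by idempotence of the projectors $\bm{Z}\bm{D}_{\bm{h}_i}$ and $\bm{Z}\bm{D}_{\bm{h}_i^\perp}$, is characterized by $\bm{Z}\bm{D}_{\bm{h}_i}\bm{\mc{B}}\bm\delta_p=\bm 0$ and $\bm{Z}\bm{D}_{\bm{h}_i^\perp}\bm\xi=\bm 0$. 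Substituting these into \eqref{eq:disp_compact_form} yields $\dot{\bm\xi}=\bm 0$, $\dot{\bm p}_i=\bm\xi_i$ for $i\in\mc{V}_f$, and $\dot{\bm h}=-k_2\bm{Z}\bm{\mc{B}}\bm\delta_p$. From here I would recycle the endgame of Theorem \ref{thm:bearing_only_asympt_converg} almost verbatim with the substitution $\bar{\bm{H}}^\top(\bm{g}-\bm{g}^*)\mapsto\bm{\mc{B}}\bm\delta_p$: the case analysis on whether $\bm\xi_i$ and $\bm{r}_i$ vanish (Barbalat's lemma when $\bm\xi_i\neq\bm 0$ and the ``$\bm{h}_i$ is steered toward $\bm{r}_i$'' geometric argument otherwise) gives $\dot{\bm{h}}_i\to\bm 0$ for every follower. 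Consequently $\bm{Z}\bm{\mc{B}}\bm\delta_p=\bm 0$, i.e., $\bm{\mc{B}}_{ff}\bm\delta_{p_f}=\bm 0$, and positive definiteness of $\bm{\mc{B}}_{ff}$ forces $\bm\delta_{p_f}\to\bm 0$ globally and asymptotically; the same scale-matching argument then gives $\bm{h}_i\to\bm{h}_c$ and $\bm\xi_i\to\bm{v}_c$.

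The hard part will be the $\dot{V}$ bookkeeping: three Lyapunov components, the leader/follower selector $\bm{Z}$, and the two complementary projectors $\bm{D}_{\bm{h}_i},\bm{D}_{\bm{h}_i^\perp}$ generate many cross-terms, and one has to invoke symmetry of $\bm{\mc{B}}$, the commutativity of $\bm{Z}$ with $\bm{D}_{\bm{h}_i}$, and the null-space identity $\bm{\mc{B}}(\bm{1}_n\otimes\bm{v}_c)=\bm 0$ repeatedly to see the indefinite contributions collapse. Once that cancellation is executed cleanly, the remainder of the argument is structurally identical to Theorem \ref{thm:bearing_only_asympt_converg}, since the adaptive $\bm\xi_i$ update and the heading feedback $\bm\omega_i$ in \eqref{eq:disp_tracking_control_law_i} have exactly the same shape as in \eqref{eq:tracking_control_law_i}.
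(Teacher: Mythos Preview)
Your proposal is correct and follows essentially the same route as the paper: the same Lyapunov function $V=\tfrac{1}{2}\bm\delta_p^\top\bm{\mc{B}}\bm\delta_p+\tfrac{1}{2}\abs{\bm\xi-\bm{1}_n\otimes\bm{v}_c}^2+\tfrac{u_c}{2k_2}\abs{\bm{h}-\bm{1}_n\otimes\bm{h}_c}^2$, the same cancellation mechanism (via $\bm{\mc{B}}(\bm{1}_n\otimes\bm{v}_c)=\bm 0$, $\bm{D}_{\bm{h}_i}+\bm{D}_{\bm{h}_i^\perp}=\bm{I}_{dn}$, and symmetry of $\bm{\mc{B}}$) yielding $\dot{V}=-k_1\bm\delta_p^\top\bm{\mc{B}}\bm{Z}\bm{D}_{\bm{h}_i}\bm{\mc{B}}\bm\delta_p-\bm\xi^\top\bm{Z}\bm{D}_{\bm{h}_i^\perp}\bm\xi$, and then LaSalle plus the endgame of Theorem~\ref{thm:bearing_only_asympt_converg} to obtain $\bm{Z}\bm{\mc{B}}\bm\delta_p=\bm 0$ and hence $\bm{\mc{B}}_{ff}\bm\delta_{p_f}=\bm 0$. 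The paper merely abbreviates the invariant-set analysis by writing ``following a similar argument used in Proof of Thm.~\ref{thm:bearing_only_asympt_converg}'', which is exactly what you propose to do.
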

\begin{proof}
Consider the Lyapunov function \begin{align*} \label{eq:Lyapunov_funct_disp}
V=&~\frac{1}{2}\bm\delta_p^\top\mc{\bm B}\bm\delta_p+\frac{1}{2}\abs{\bm\xi-\bm{1}_n\otimes\bm{v}_c}^2\\
&+\frac{u_c}{(2k_2)}\abs{\bm{h}-\bm{1}_n\otimes \bm{h}_c}^2.\numberthis
\end{align*}
It is noted that $\bm\delta_p^\top\mc{\bm B}\bm\delta_p=\bm\delta_{p_f}^\top\mc{\bm B}_{ff}\bm\delta_{p_f}\geq 0$, where the equality holds if and only if $\bm\delta_{p_f}=\bm 0 \Leftrightarrow \bm p_f=\bm p_f^*$. Thus, $V$ is continuously differentiable, positive definite and radially unbounded. The derivative of $V$ along the trajectory of \eqref{eq:disp_compact_form} is given as
\begin{align*}
\dot{V}=&~\bm\delta_p^\top\mc{\bm B}(\dot{\bm{p}}-\bm{1}_n\otimes \bm v_c)+(\bm\xi-\bm{1}_n\otimes\bm{v}_c)^\top\dot{\bm\xi}\\
&\qquad+\frac{u_c}{k_2}(\bm{h}-\bm{1}_n\otimes \bm{h}_c)^\top\dot{\bm h}\\
=&~\bm\delta_p^\top\mc{\bm B}(\bm{I}_{dn}-\bm{Z})(\bm{1}_n\otimes \bm{v}_c)-\bm\delta_p^\top\mc{\bm B}\bm{Z}\bm{D}_{\bm{h}_i}k_1\mc{\bm B}\bm\delta_p\\
&+\bm\delta_p^\top\mc{\bm B}\bm{Z}\bm{D}_{\bm{h}_i}\bm\xi-\bm\xi^\top\bm{Z}\bm{D}_{\bm{h}_i}\mc{\bm B}\bm\delta_p-\bm\xi^\top\bm{Z}\bm{D}_{\bm{h}_i^\perp}\bm\xi\\
&+(\bm{1}_n\otimes \bm v_c)^\top\bm{Z}\bm{D}_{\bm{h}_i}\mc{\bm B}\bm\delta_p+(\bm{1}_n\otimes \bm v_c)^\top\bm{Z}\bm{D}_{\bm{h}_i^\perp}\bm\xi\\
&+(\bm{1}_n\otimes\bm{v}_c)^\top\bm{Z}\bm{D}_{\bm{h}_i^\perp}(\mc{\bm B}\bm\delta_p-\bm\xi)\\
=&-\bm\delta_p^\top\mc{\bm B}\bm{Z}\bm{D}_{\bm{h}_i}k_1\mc{\bm B}\bm\delta_p-\bm\xi^\top\bm{Z}\bm{D}_{\bm{h}_i^\perp}\bm\xi\\
&+(\bm{1}_n\otimes\bm{v}_c)^\top(\bm{I}_{dn}-\bm{Z}+\bm{Z}\bm{D}_{\bm{h}_i^\perp}+\bm{Z}\bm{D}_{\bm{h}_i})\mc{\bm B}\bm\delta_p\\
=&-k_1\bm\delta_p^\top\mc{\bm B}\bm{Z}\bm{D}_{\bm{h}_i}\mc{\bm B}\bm\delta_p-\bm\xi^\top\bm{Z}\bm{D}_{\bm{h}_i^\perp}\bm\xi\\
\leq& ~0,\numberthis \label{eq:dotV_disp}
\end{align*}
where, the second equality makes use of $\mc{\bm B}(\bm{1}_n\otimes \bm v_c)=\bm 0$ and $\bm h^\top \dot{\bm{h}}=0$, and in the third equality, we have used $(\mc{\bm B}\bm{Z}\bm{D}_{\bm{h}_i})^\top=\bm{Z}\bm{D}_{\bm{h}_i}\mc{\bm B}^\top=\bm{Z}\bm{D}_{\bm{h}_i}\mc{\bm B}$. It follows from $\dot{V}\leq 0$ that $V(t)\leq V(0)$, and hence $\bm\delta_p(t)$ and $\bm\xi(t)$ are always bounded. Consequently, by LaSalle's invariance principle, $\bm\delta_{p}(t)$ and $\bm\xi(t)$ asymptotically converge to the largest invariant set where $\dot{V}=0$. By \eqref{eq:dotV_disp}, $\dot{V}=0$ leads to
\begin{align}
&\bm{Z}\bm{D}_{\bm{h}_i}\mc{\bm B}\bm\delta_p=\bm{0} \text{~and~} 
 \bm{ZD}_{\bm{h}_i^\perp}\bm\xi=\bm{0}.\label{eq:dot_V_disp_invariant2}
\end{align}
Following a similar argument used in Proof of Thm. \ref{thm:bearing_only_asympt_converg}, we have that the agents' velocities, i.e., $\bm{v}_i=\bm{\xi}_i,i\in \mc{V}_f$, {are constant} and $\bm{Z}\mc{\bm B}\bm\delta_p=\bm{0}$. The latter implies that 
\begin{equation*}
\bm\delta_p^\top \bm{Z}\mc{\bm B}\bm\delta_p=\bm{0}\Rightarrow\bm\delta_{p_f}^\top\mc{\bm B}_{ff}\bm\delta_{p_f}=\bm{0},
\end{equation*} which leads to $\bm p_f=\bm p_f^*$. Thus, $\bm p_f(t)\rightarrow\bm p_f^*(t)$ {globally and} asymptotically as $t\rightarrow \infty$.
\end{proof}

{A sufficient condition for inter-agent collision avoidance can be obtained as follows.} From $\dot{V}\leq 0$, one has that $V(t)\leq \beta$ for all $t\geq 0$ if initially $V(0)\leq \beta$ for a constant $\beta>0$ . It follows that $\frac{1}{2}\bm\delta_p^\top\mc{\bm B}\bm\delta_p\leq \beta \Leftrightarrow \abs{\bm\delta_p}\leq \phi :=\sqrt{2\beta/\lambda_{\min}(\mc{\bm B}_{ff})}, \forall t\geq 0$. Thus, when $\beta$ is sufficiently small such that $\phi \leq \epsilon$, where $\epsilon$ is given in \eqref{eq:epsilon}, then $\abs{\bm{p}_i-\bm{p}_j}\geq \kappa$, $\forall i,j\in \mc{V}, i\neq j,$ and for all $t\geq 0$ and $\bm{p}(t)\rightarrow \bm{p}^*(t)$ asymptotically as $t\rightarrow \infty$ (see Corollary \ref{coroll:no_collision}). In other words, no collision will happen between the agents.

\section{Simulation}\label{sec:simulation}
We provide two simulations of the formation tracking of six nonholonomic agents in the {$3$-D space} under the control laws \eqref{eq:tracking_control_law_i} and \eqref{eq:disp_tracking_control_law_i}, respectively, in Fig. \ref{fig:simulation}. A video of the simulations {and more simulation results for the $2$-D case} can be found in \url{https://youtu.be/EM-cgxof8bk}. The graph $\bar{\mc{G}}$ is defined as $\mc{V}=\{1,\ldots,6\}$, and $\bar{\mc{E}}=\{(1,2),(2,3),(1,3),(1,4),(3,4),(3,5),(3,6),(4,5),(4,6)\}$ depicted as blue solid lines in Fig. \ref{fig:traject_bearing}. Agents $1$ and $2$ are leaders and the four other agents are followers. The leaders' initial positions are $\bm{p}_1(0)=[10,0,{0}]^\top$ and $\bm{p}_2(0)=[10,5,{0}]^\top$ (m), and their common heading and velocity are $\bm{h}_c=[\cos(\pi/6),\sin(\pi/6),{0}]^\top$ and $u_c=0.15$ (m/s), respectively. {Each agent maintains a body-fixed coordinate frame shown in red-green-blue, whose first (red) axis points to the heading direction. The unit vectors, i.e., $\bm e\in \mb{R}^3$, along the coordinate axes of agent $i$ thus obey $\dot{\bm e}=\bm \omega_i\times\bm e$ as in \eqref{eq:unicycle_model}.} The desired bearing vectors are $\bm{g}_{12}^*=\bm{g}_{43}^*=\bm{g}_{56}^*=[0,1,{0}]^\top,$ $\bm{g}_{14}^*=\bm{g}_{23}^*=[-1,0,{0}]^\top$, {$\bm{g}_{36}^*=\bm{g}_{45}^*=[-5/41,0,4/41]^\top$}, $\bm{g}_{13}^*=[-1/\sqrt{2},1/\sqrt{2},{0}]^\top$ and $\bm{g}_{35}^*={[-5/66,-5/66,2/33]^\top}$. Note that the desired formation $(\bar{\mc{G}},\bm{p}^*)$ is infinitesimally bearing rigid.
\subsection{Bearing-only formation tracking control}
Simulation results of the formation tracking of the system under the tracking control law \eqref{eq:tracking_control_law_i} are given in Figs. \ref{fig:simulation} (a, c, e).
The controller's gains are chosen as $k_1=15$, $k_2=7$. It can be seen that the system achieves the desired formation and the followers' velocities $\bm{v}_i$ converge to $\bm{v}_c=u_c\bm{h}_c$. Therefore, the followers' headings $\bm{h}_i$ align with $\bm{h}_c$ asymptotically. We observe from the simulation that selecting higher control gains $k_1$ and $k_2$ would result in faster convergence speed but higher velocity inputs in the transient state.
\begin{figure*}[t]
\centering
\begin{subfigure}[b]{0.48\textwidth}
\centering
\includegraphics[width=0.9\textwidth]{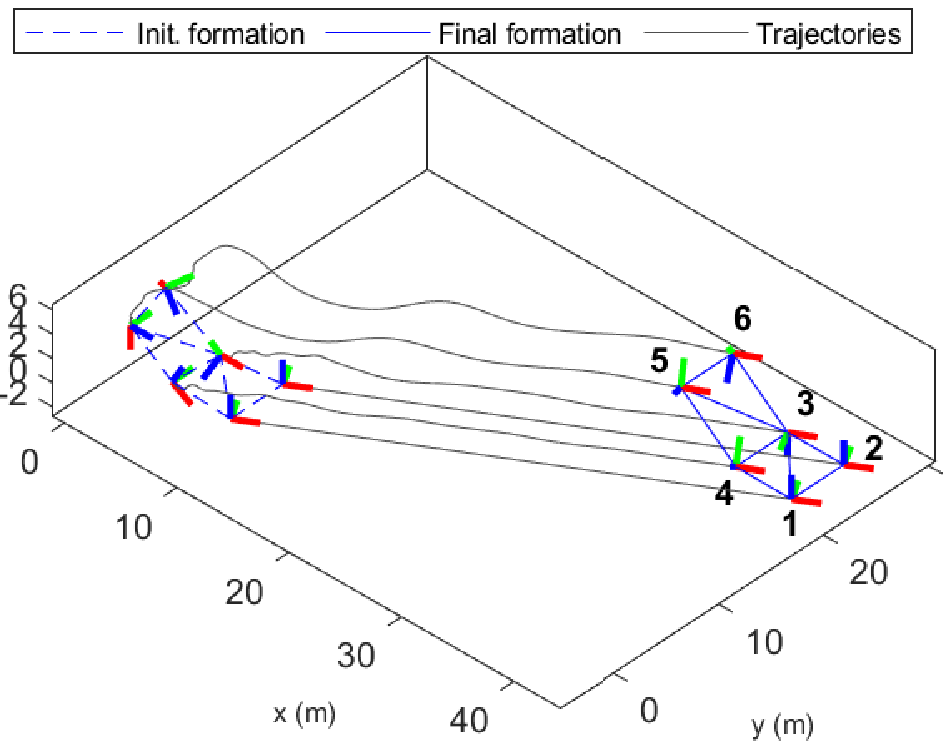}
\caption{Trajectories of the agents.}
\label{fig:traject_bearing}
\end{subfigure}
\begin{subfigure}[b]{0.48\textwidth}
\centering
\includegraphics[width=0.9\textwidth]{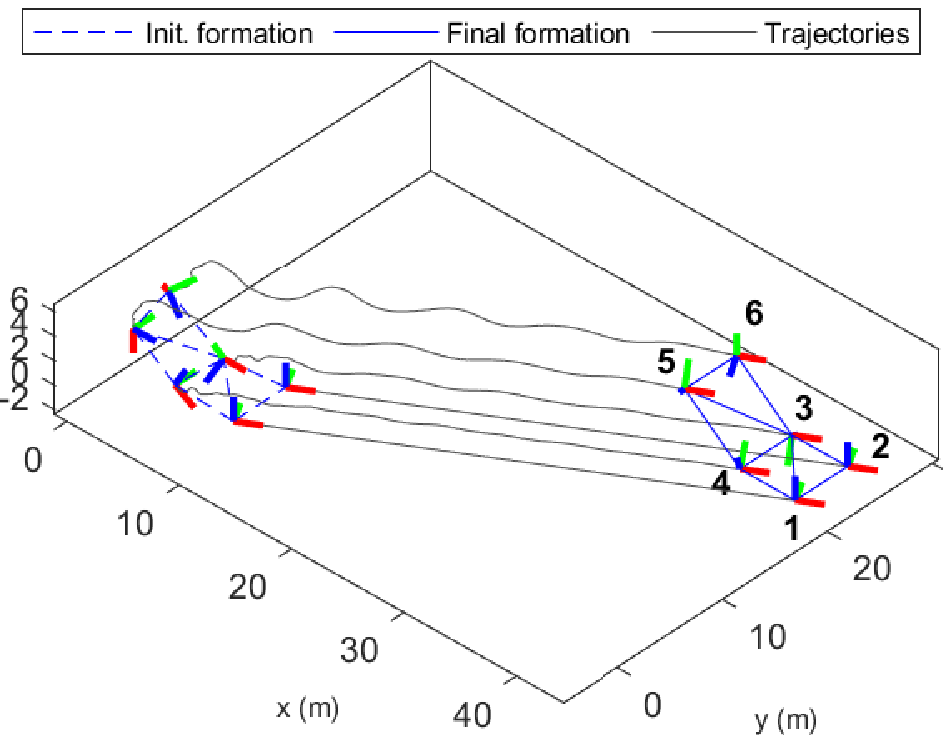}
\caption{Trajectories of the agents.}
\label{fig:traject_disp}
\end{subfigure}
\begin{subfigure}[b]{0.48\textwidth}
\centering
\includegraphics[width=0.9\textwidth]{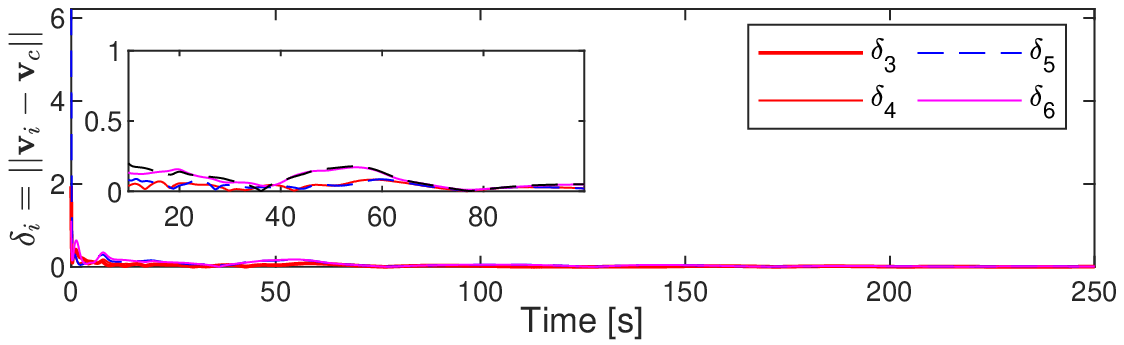}
\caption{Velocity tracking errors $\abs{\bm{v}_i-\bm{v}_c},i\in \mc{V}_f$.}
\label{fig:vel_bearing}
\end{subfigure}
\begin{subfigure}[b]{0.48\textwidth}
\centering
\includegraphics[width=0.9\textwidth]{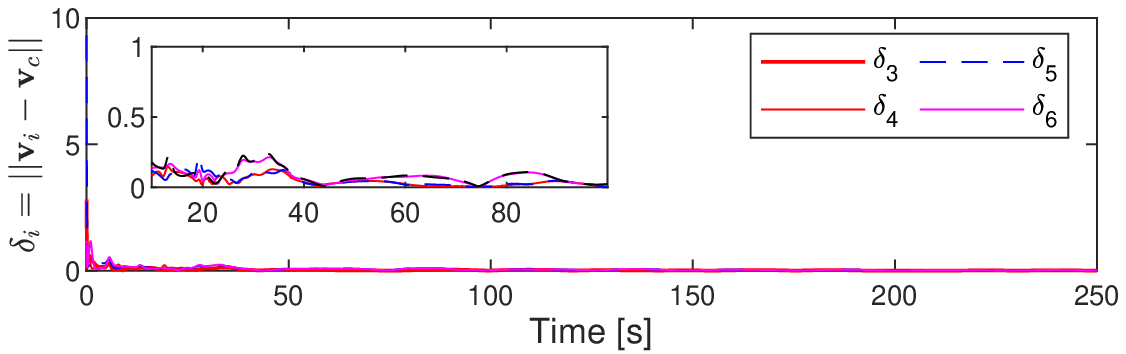}
\caption{Velocity tracking errors $\abs{\bm{v}_i-\bm{v}_c},i\in \mc{V}_f$.}
\label{fig:vel_disp}
\end{subfigure}
\begin{subfigure}[b]{0.48\textwidth}
\centering
\includegraphics[width=0.9\textwidth]{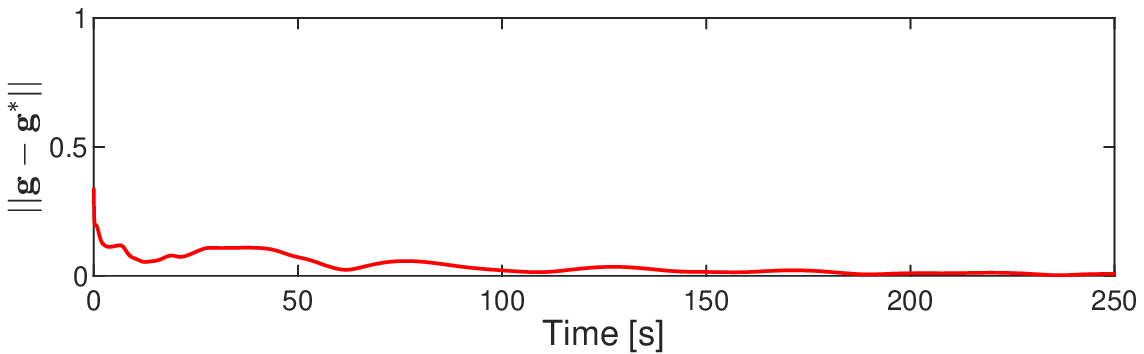}
\caption{Bearing error $\abs{\bm g - \bm {g}^*}$.}
\label{fig:bearing_err_bearing}
\end{subfigure}
\begin{subfigure}[b]{0.48\textwidth}
\centering
\includegraphics[width=.9\textwidth]{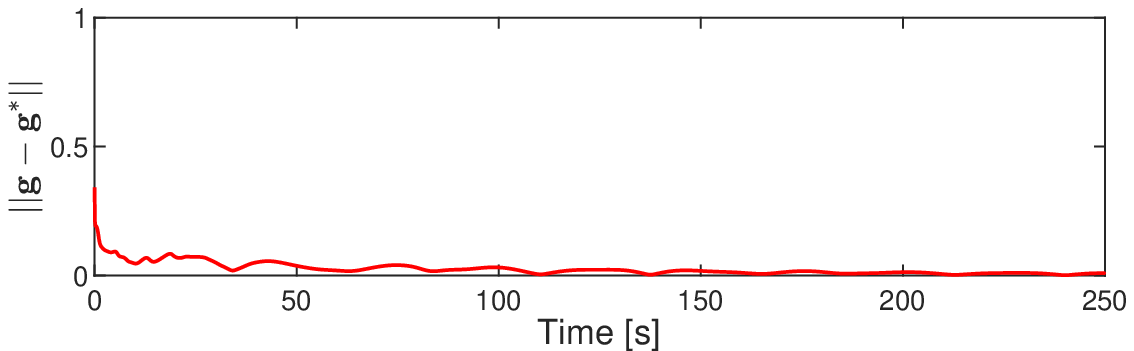}
\caption{Bearing error $\abs{\bm g - \bm {g}^*}$.}
\label{fig:bearing_err_disp}
\end{subfigure}
\caption{Formation tracking control of $6$ agents {in $\mb{R}^3$}: (a, c, e) under control law \eqref{eq:tracking_control_law_i} and (b, d, f) under control law \eqref{eq:disp_tracking_control_law_i}.}
\label{fig:simulation}
\end{figure*}

\subsection{Bearing-based formation tracking control}
Simulation results of the formation tracking of the system under the control law \eqref{eq:disp_tracking_control_law_i} are shown in Figs. \ref{fig:simulation} (b, d, f). 
In the simulation, the control gains are chosen as $k_1=5$, $k_2=3$. As can be seen, $\bm{p}\rightarrow \bm{p}^*$ and $\bm{v}_i\rightarrow \bm{v}_c$ asymptotically for all $i\in \mc{V}_f$.  The convergence of the system is slightly faster than that of the first simulation in the preceding subsection even with much lower control gains. This is due to the fact that the magnitude of the reference control vector $\bm{r}_i$ in \eqref{eq:disp_tracking_control_law_i} increases proportionally with the inter-agent distances. In contrast, $\bm{r}_i$ in \eqref{eq:tracking_control_law_i} is always bounded by $\abs{\sum_{j\in \mc{N}_i}(\bm{g}_{ij}-\bm{g}_{ij}^*)}\leq 2|\mc{N}_i|$, where $|\mc{N}_i|$ is the cardinality of $\mc{N}_i$.
\subsection{Bearing-constrained formation tracking of unicycles}
\begin{figure*}[t]
\centering
\begin{subfigure}[b]{0.47\textwidth}
\centering
\includegraphics[width=0.9\textwidth]{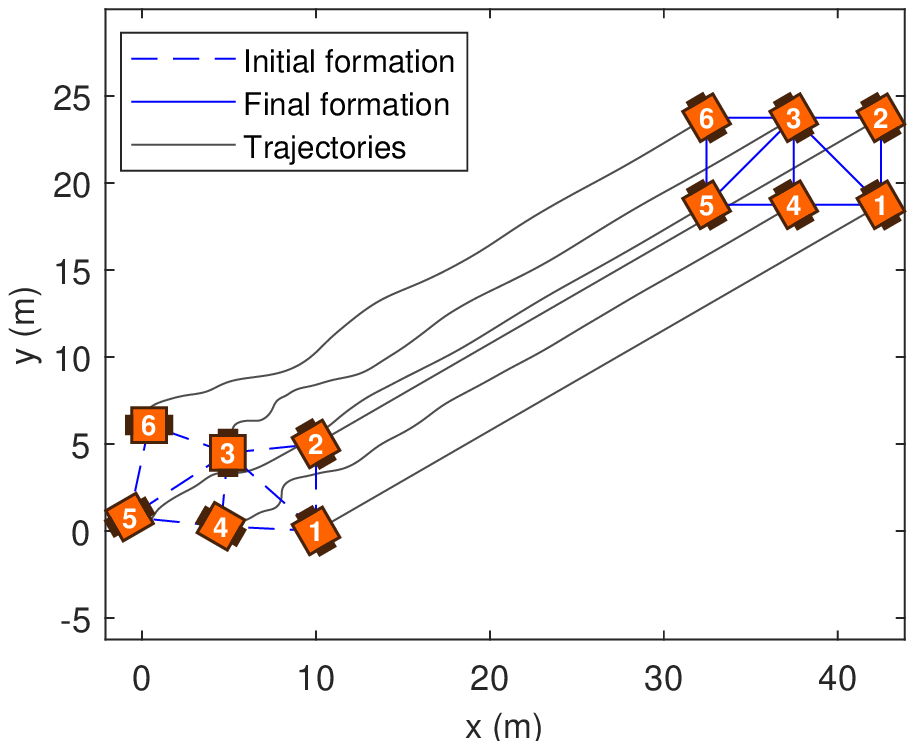}
\caption{Trajectories of the agents.}
\label{fig:traject_bearing_2D}
\end{subfigure}
\begin{subfigure}[b]{0.47\textwidth}
\centering
\includegraphics[width=0.9\textwidth]{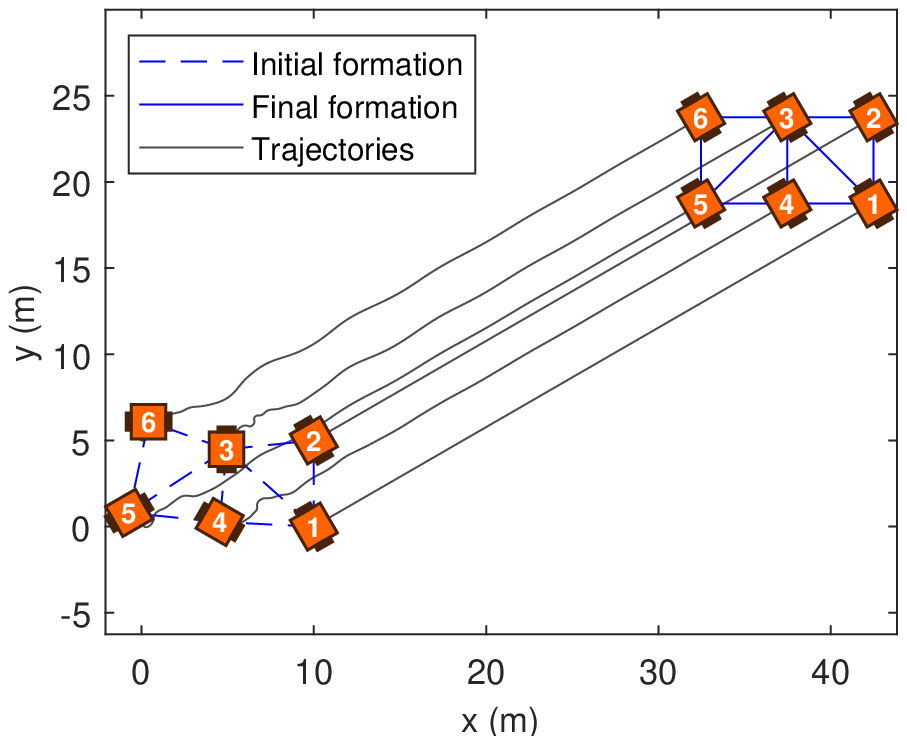}
\caption{Trajectories of the agents.}
\label{fig:traject_disp_2D}
\end{subfigure}
\begin{subfigure}[b]{0.47\textwidth}
\centering
\includegraphics[width=0.88\textwidth]{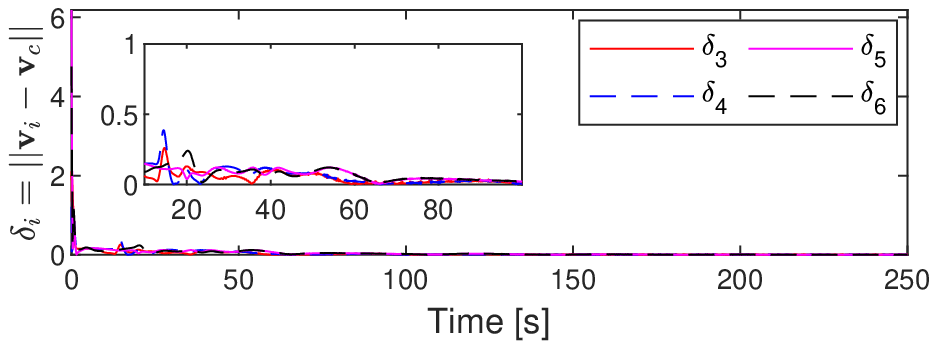}
\caption{Velocity tracking errors $\abs{\bm{v}_i-\bm{v}_c},i\in \mc{V}_f$.}
\label{fig:vel_bearing_2D}
\end{subfigure}
\begin{subfigure}[b]{0.47\textwidth}
\centering
\includegraphics[width=0.88\textwidth]{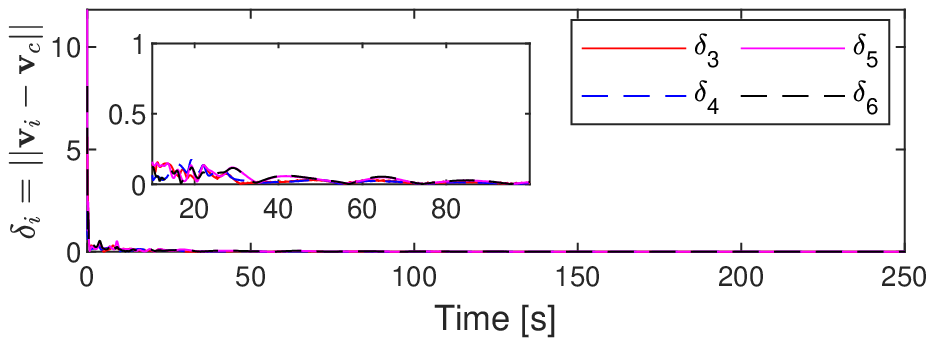}
\caption{Velocity tracking errors $\abs{\bm{v}_i-\bm{v}_c},i\in \mc{V}_f$.}
\label{fig:vel_disp_2D}
\end{subfigure}
\begin{subfigure}[b]{0.47\textwidth}
\centering
\includegraphics[width=0.88\textwidth]{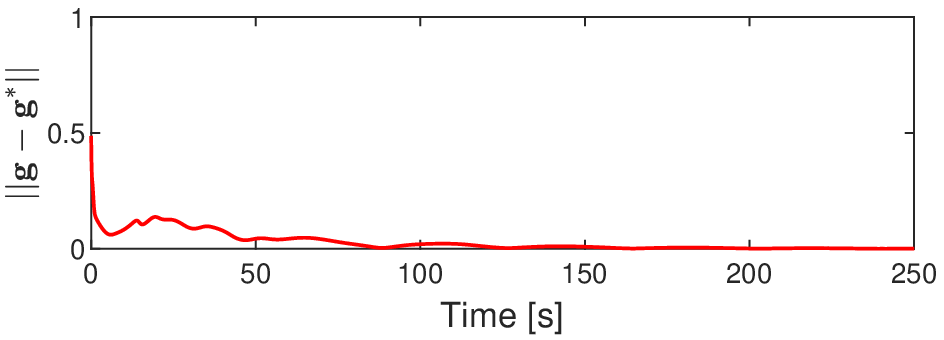}
\caption{Bearing error $\abs{\bm g - \bm {g}^*}$.}
\label{fig:bearing_err_bearing_2D}
\end{subfigure}
\begin{subfigure}[b]{0.47\textwidth}
\centering
\includegraphics[width=.88\textwidth]{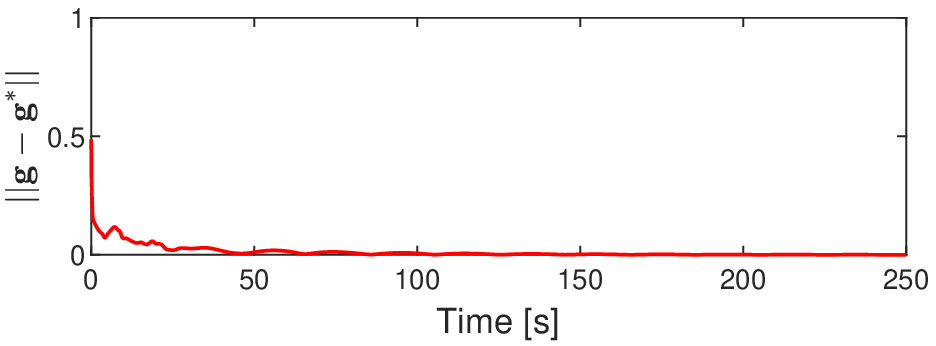}
\caption{Bearing error $\abs{\bm g - \bm {g}^*}$.}
\label{fig:bearing_err_disp_2D}
\end{subfigure}
\caption{Formation tracking control of $6$ unicycle agents in $2$-D: (a, c, e) under control law \eqref{eq:tracking_control_law_i} and (b, d, f) under control law \eqref{eq:disp_tracking_control_law_i}.}
\label{fig:simulation_2D}
\end{figure*}
This subsections provides two simulations of the formation tracking of six nonholonomic agents in the $2$-D plane under the control laws \eqref{eq:tracking_control_law_i} and \eqref{eq:disp_tracking_control_law_i}, respectively, in Fig. \ref{fig:simulation_2D}. A video of the simulations can also be seen in \url{https://youtu.be/m2gB-mOcVjk}.
The graph $\bar{\mc{G}}$ is defined as $\mc{V}=\{1,\ldots,6\}$, and $\bar{\mc{E}}=\{(1,2),(2,3),(1,3),(1,4),(3,4),(3,5),(3,6),(4,5),(4,6)\}$ depicted as blue solid lines in Fig. \ref{fig:traject_bearing_2D}. Agents $1$ and $2$ are leaders and the four other agents are followers. The leaders' initial positions are $\bm{p}_1(0)=[10,0]^\top$ and $\bm{p}_2(0)=[10,5]^\top$ (m), and their common heading and velocity are $\bm{h}_c=[\cos(\pi/6),\sin(\pi/6)^\top$ and $u_c=0.2$ (m/s), respectively. The desired bearing vectors are $\bm{g}_{12}^*=\bm{g}_{43}^*=\bm{g}_{56}^*=[0,1]^\top,$ $\bm{g}_{14}^*=\bm{g}_{23}^*=\bm{g}_{36}^*=\bm{g}_{45}^*=[-1,0]^\top$, $\bm{g}_{13}^*=[-1/\sqrt{2},1/\sqrt{2}]^\top$ and $\bm{g}_{35}^*=[-1/\sqrt{2},-1/\sqrt{2}]^\top$. The control gains are selected the same as before.

It can be observed that, under both the control laws, the system achieves the desired formation and the followers' velocities $\bm{v}_i$ converge to $\bm{v}_c=u_c\bm{h}_c$ eventually. Therefore, the followers' headings $\bm{h}_i$ align with $\bm{h}_c$ asymptotically. 
\section{Conclusion}\label{sec:conclusion}
In this letter, we proposed two novel adaptive formation tracking control schemes for nonholonomic agents with constant velocity leaders based solely on the inter-agent bearings and displacements, respectively. In the proposed control laws, the followers do not know the constant reference velocity nor need to communicate variables with their neighbors. For future works, bearing-only formation tracking control with time-varying leaders' velocity or for underactuated autonomous surface vehicles are worth investigating.

\nocite{} 
\bibliographystyle{IEEEtran}
\bibliography{IEEEabrv,quoc2018,quoc2019} 

\begin{thebibliography}{10}
\providecommand{\url}[1]{#1}
\csname url@samestyle\endcsname
\providecommand{\newblock}{\relax}
\providecommand{\bibinfo}[2]{#2}
\providecommand{\BIBentrySTDinterwordspacing}{\spaceskip=0pt\relax}
\providecommand{\BIBentryALTinterwordstretchfactor}{4}
\providecommand{\BIBentryALTinterwordspacing}{\spaceskip=\fontdimen2\font plus
\BIBentryALTinterwordstretchfactor\fontdimen3\font minus
  \fontdimen4\font\relax}
\providecommand{\BIBforeignlanguage}[2]{{%
\expandafter\ifx\csname l@#1\endcsname\relax
\typeout{** WARNING: IEEEtran.bst: No hyphenation pattern has been}%
\typeout{** loaded for the language `#1'. Using the pattern for}%
\typeout{** the default language instead.}%
\else
\language=\csname l@#1\endcsname
\fi
#2}}
\providecommand{\BIBdecl}{\relax}
\BIBdecl

\bibitem{Zhao2016tac}
S.~Zhao and D.~Zelazo, ``Bearing rigidity and almost global bearing-only
  formation stabilization,'' \emph{IEEE Trans. Autom. Control}, vol.~61, no.~5,
  pp. 1255--1268, 2015.

\bibitem{Quoc2018ccta}
Q.~V. {Tran}, S.~H. {Park}, and H.-S. {Ahn}, ``Bearing-based formation control
  via distributed position estimation,'' in \emph{2018 IEEE Conference on
  Control Technology and Applications (CCTA)}, 2018, pp. 658--663.

\bibitem{Schiano2016}
F.~Schiano, A.~Franchi, D.~Zelazo, and P.~R. Giordano, ``A rigidity-based
  decentralized bearing formation controller for groups of quadrotor {UAV}s,''
  in \emph{Proc. the 2016 IEEE/RSJ Int. Confer. Intelligent Robots and Systems
  (IROS)}, 2016, pp. 5099--5106.

\bibitem{Quoc2018tcns}
Q.~V. Tran, M.~H. Trinh, D.~Zelazo, D.~Mukherjee, and H.-S. Ahn, ``Finite-time
  bearing-only formation control via distributed global orientation
  estimation,'' \emph{IEEE Trans. Control Network Syst.}, vol.~2, no.~6, pp.
  702--712, 2019.

\bibitem{Xli2020TCyber}
X.~{Li}, C.~{Wen}, and C.~{Chen}, ``Adaptive formation control of networked
  robotic systems with bearing-only measurements,'' \emph{IEEE Trans. Cybern.},
  vol.~51, no.~1, pp. 199--209, 2021.

\bibitem{Bishop2011}
A.~N. Bishop, ``Distributed bearing-only formation control with four agents and
  a weak control law,'' in \emph{Proc. IEEE Int. Confer. Control \& Automation
  (IEEE ICCA'11)}, 2011, pp. 30--35.

\bibitem{Zhao2019tac}
S.~Zhao, Z.~Li, and Z.~Ding, ``Bearing-only formation tracking control of
  multiagent systems,'' \emph{IEEE Trans. Autom. Control}, vol.~64, no.~11, pp.
  4541--4554, 2019.

\bibitem{Tron2016csm}
R.~Tron, J.~Thomas, G.~Loianno, K.~Daniilidis, and V.~Kumar, ``A distributed
  optimization framework for localization and formation control: applications
  to vision-based measurements,'' \emph{IEEE Control Syst. Mag.}, vol.~36,
  no.~4, pp. 22--44, 2016.

\bibitem{XPeng2020Tmecha}
X.~Peng, Z.~Sun, K.~Guo, and Z.~Geng, ``Mobile formation coordination and
  tracking control for multiple nonholonomic vehicles,'' \emph{IEEE/ASME Trans.
  Mechatron.}, vol.~25, no.~3, pp. 1231--1242, 2020.

\bibitem{Qingkai2018SCL}
Q.~Yang, M.~Cao, H.~G. de~Marina, H.~Fang, and J.~Chen, ``Distributed formation
  tracking using local coordinate systems,'' \emph{Syst. Control Lett.}, vol.
  111, pp. 70--78, 2018.

\bibitem{Khaledyan2018acc}
M.~Khaledyan and M.~de~Queiroz, ``Translational maneuvering control of
  nonholonomic kinematic formations: Theory and experiments,'' in \emph{Proc.
  Amer. Control Confer. (ACC)}, 2018, pp. 2910--2915.

\bibitem{Zhao2017}
S.~Zhao and D.~Zelazo, ``Translational and scaling formation maneuver control
  via bearing-based approach,'' \emph{IEEE Trans. Control Network Syst.},
  vol.~4, no.~3, pp. 429--438, 2017.

\bibitem{YHuang2021tcns}
Y.~Huang and Z.~Meng, ``Bearing-based distributed formation control of multiple
  vertical take-off and landing uavs,'' \emph{IEEE Trans. Control Network
  Syst.}, vol.~8, no.~3, pp. 1281--1292, 2021.

\bibitem{Minh2021auto}
M.~H. Trinh, Q.~V. Tran, D.~V. Vu, P.~D. Nguyen, and H.-S. Ahn, ``Robust
  tracking control of bearing-constrained leader–follower formation,''
  \emph{Automatica}, vol. 131, p. 109733, 2021.

\bibitem{JZhaoLCSS2021}
J.~Zhao, X.~Yu, X.~Li, and H.~Wang, ``Bearing-only formation tracking control
  of multi-agent systems with local reference frames and constant-velocity
  leaders,'' \emph{IEEE Control Syst. Lett.}, vol.~5, no.~1, pp. 1--6, 2021.

\bibitem{Minh2022LCSS}
M.~H. Trinh and H.-S. Ahn, ``Finite-time bearing-based maneuver of acyclic
  leader-follower formations,'' \emph{IEEE Control Syst. Lett.}, vol.~6, pp.
  1004--1009, 2022.

\bibitem{XLi2021Tcyb}
X.~Li, C.~Wen, X.~Fang, and J.~Wang, ``Adaptive bearing-only formation tracking
  control for nonholonomic multiagent systems,'' \emph{IEEE Trans. Cybern.},
  pp. 1--11, 2021.

\bibitem{Quoc2020tcns}
Q.~V. Tran and H.-S. Ahn, ``Distributed formation control of mobile agents via
  global orientation estimation,'' \emph{IEEE Trans. Control Network Syst.},
  vol.~4, no.~7, pp. 1654--1664, 2020.

\bibitem{Xiaodong2022AutoSinica}
X.~He, Z.~Sun, Z.~Geng, and A.~Robertsson, ``Exponential set-point
  stabilization of underactuated vehicles moving in three-dimensional space,''
  \emph{IEEE/CAA J. Autom. Sin.}, vol.~9, no.~2, pp. 270--282, 2022.

\bibitem{Mesbahi2010}
M.~Mesbahi and M.~Egerstedt, \emph{Graph Theoretic Methods in Multiagent
  Networks}.\hskip 1em plus 0.5em minus 0.4em\relax Princeton University Press,
  2010.

\bibitem{Ma2004}
Y.~Ma, S.~Soatto, J.~Kosecka, and S.~Sastry, \emph{An Invitation to 3D
  Vision}.\hskip 1em plus 0.5em minus 0.4em\relax New York: Springer, 2004.

\bibitem{Khalil2002}
H.~K. Khalil, \emph{Nonlinear Systems}, 3rd~ed.\hskip 1em plus 0.5em minus
  0.4em\relax Prentice Hall, 2002.

\end{thebibliography}

%
%
\end{document}